\begin{document}
\title{On calculation of the interweight distribution\\ 
       of an equitable partition%
        \thanks{The paper has been published in the Journal of Algebraic Combinatorics, \url{http://dx.doi.org/10.1007/s10801-013-0492-3};
        the final publication is available at http://link.springer.com. \\
            The research was partially supported by 
            the Russian Foundation for Basic Research  (grant 13-01-00463),
            by the Ministry of Education and Science of Russian Federation
            (project 8227), and by the Target program of SB RAS for 2012-2014
            (integration project No. 14).}
       }
\author{Denis S. Krotov}
\institute{D. Krotov \at
              Sobolev Institute of Mathematics, 
  pr. Akademika Koptyuga 4, 
  Novosibirsk 630090, Russia. \\
  Novosibirsk State University, Pirogova 2, 
  Novosibirsk 630090, Russia. \\
              Tel.: +7-913-9331414 \\
              \email{krotov@math.nsc.ru}  
}

\date{Received: date / Accepted: date}

\maketitle

\begin{abstract}
We derive recursive and direct formulas
for the interweight distribution
of an equitable partition of a hypercube.
The formulas involve a three-variable
generalization of the Krawtchouk polynomials.

\keywords{Equitable partition \and Strong distance invariance \and
Interweight distribution \and Distance distribution \and {K}rawtchouk polynomial }
\end{abstract}

\section{Introduction}\label{s:0}
 We study the equitable partitions, 
   see e.g. \cite[\S5.1]{Godsil93}
(also known as regular partitions, 
   see e.g. \cite[\S11.1.B]{Brouwer}, 
partition designs, see e.g. \cite{CCD:92}, 
or perfect colorings, see e.g. \cite{FDF:PerfCol}; 
less popular equivalent terms include
coherent partitions \cite{Higman:75}, 
feasible colorations \cite[\S4.1]{CDS}, 
distributive colorings \cite{Vizing:95}),
of the $n$-cubes. 
The goal of the paper is to derive recursive 
and direct formulas
for the interweight distributions
of an equitable partition of a hypercube.
Some results are formulated in terms 
of the triangle distribution of the partition,
which relates to the interweight distributions
in the similar manner as the distance distribution relates to the weight distribution
(the later two concepts are well known in coding theory, see e.g. \cite{MWS}).
The formulas can be used to prove the nonexistence of equitable partitions with 
certain parameters, which is demonstrated by examples 
(Proposition~\ref{p:100}, Example~\ref{ex:22}).
The results are applicable
to the completely regular sets 
(including perfect codes, nearly perfect codes, 
and some kind of uniformly packed codes),
as they can be represented in terms of equitable partitions 
(see e.g. \cite{CCD:92}).
As well, the interweight distributions have a potential in
studying related objects, 
such as 
difference sets, which compose check 
matrices of completely regular codes,
and 
linear two-weight codes, 
which are dual to completely regular codes
(see e.g. \cite[Corollary~4.3]{CalKan:86:2weight}).

In Section~\ref{s:pre}, we give main definitions.
Section~\ref{s:WT} contains recursive formulas for the calculation
of the interweight distribution of an equitable partition of an $n$-cube.
One of the formulas is proved in Section~\ref{s:proof}, 
while the other are just simple corollaries of the first one.
In Section~\ref{s:poly}, a direct formula for 
the interweight distribution is derived, 
in terms of polynomials
in the quotient matrix of the equitable partition 
and their generating function.
In Section~\ref{s:ci}, we observe an empiric relation
with the so-called correlation-immunity bound on the quotient matrices
of equitable $2$-partitions of an $n$-cube.
In Section~\ref{s:real}, we briefly discuss a real-valued generalization 
of the equitable partitions. In the concluding section,
we give final remarks and formulate some open questions.

\section{Preliminaries}\label{s:pre}

An \emph{equitable partition}
of a graph $G=(V(G),E(G))$ 
is an ordered partition $C=(C_1,\ldots,C_m)$ 
of $V(G)$ such that 
for every $i$ and $j$ from $1$ to $m$ 
and every vertex $v$ from $C_i$ the number $S_{ij}$ 
of its neighbors from $C_j$ depends only on $i$ and $j$ 
and does not depend on the choice of $v$.
The matrix $S=(S_{ij})_{i,j=1}^m$ 
is called the \emph{quotient matrix} of $C$.

Let $C=(C_1,\ldots,C_m)$ 
be a collection of vertex sets 
of a graph $G$ of diameter $d$.
The \emph{weight distribution} of $C$ 
with respect to a vertex $v$ of $G$ 
is the collection of numbers 
$(W^r_{v,j})_{j=1}^{m}\vphantom{()}_{r=0}^{d}$ 
where $W^r_{v,j}$ is the number of vertices of $C_j$ 
at distance $r$ from $v$
(by the distance, we mean the natural graph distance, 
i.e., the length of a shortest path between two vertices).

One of well-known properties 
of the equitable partitions 
of distance regular graphs is the 
\emph{distance invariance}.
A collection $C=(C_1,\ldots,C_m)$ of mutually disjoint
vertex sets is called \emph{distance invariant} 
if for every $i\in \{1,\ldots,m\}$ the weight distribution 
of $C$
with respect to a vertex $v$ from $C_i$ 
does not depend on the choice of $v$ and
depends only on $i$.

\begin{remark}
The concept of the distance invariance,
defined as above,
is applicable to partitions, 
as well as to single sets, $C=(C_1)$.
In the last case, $C_1$ is known in coding theory 
as a \emph{distance invariant set}, 
or a \emph{distance invariant code} 
\cite{DelFar:89}.
\end{remark}

The weight distribution 
of an equitable partition $C$ 
of a distance regular graph
can be calculated
using recursive relations or the direct formula
(see \cite{Martin:PhD,Kro:struct})
$W_v^w = \Pi^{(w)}(S) W_v^0$   
where $W_v^w=(W^w_{v,1},\ldots,W^w_{v,m})$,
$S$ is the quotient matrix of $C$,
and $\Pi^{(w)}$ is a polynomial related to the graph:
if $A$ is the adjacency matrix of the graph
then $A^{(w)}=\Pi^{(w)}(A)$ is the distance-$w$ matrix
($A^{(w)}_{u,v}=1$
if the distance between the vertices $u$, $v$ equals $w$,
and $A^{(w)}_{u,v}=0$ otherwise).
In the current paper, we consider only $n$-cubes, 
which will be defined below; 
for the background on the distance regular graphs 
in general see, e.g., \cite{Brouwer}.

One of the known strengthenings 
of the distance invariance property
is the \emph{strong distance invariance} \cite{Vas2002en,Vas09:inter}.
For a fixed vertex $v$ from $C_i$, let $W^{r_1,r_2,r_3}_{ijk}$ 
denote the number of the pairs $(x,y)$ 
such that $d(x,y)=r_2+r_3$, $d(v,y)=r_1+r_3$, $d(v,x)=r_1+r_2$,  
$x\in C_j$, and $y\in C_k$.
A collection $C=(C_1,\ldots,C_m)$ of mutually disjoint
vertex sets is called \emph{strongly distance invariant} 
if its \emph{interweight distribution} 
$W_i=(W^{r_1,r_2,r_3}_{ijk})_{r_1,r_2,r_3=0}^d\vphantom{()}_{j,k=1}^{m}$ 
 with respect to a vertex $v$ from $C_i$ 
does not depend on the choice of $v$ 
(originally, elements of the interweight 
distribution were indexed by
the distances $a=d(x,y)$, $b=d(v,y)$, $c=d(v,x)$; 
by the reasons that can be seen from formulas below,
we reenumerate them using the indices 
$r_1=(-a+b+c)/2$, $r_2=(a-b+c)/2$, $r_3=(a+b-c)/2$).

The \emph{$n$-cube} $H^n$ is the graph $(V(H^n),E(H^n))$
whose vertex set is the set of all $n$-words 
in the alphabet $\{0,1\}$,
two words being adjacent if and only if 
they differ in exactly one position.

\begin{theorem}[\cite{Vas09:inter}]\label{th:Vas}
 The equitable partitions of the $n$-cubes 
 are strongly distance invariant.
\end{theorem}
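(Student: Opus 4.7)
The plan is to derive a local recurrence for $W^{r_1,r_2,r_3}_{ijk}(v)$ and induct on the parameter triple, using the equitable-partition property to absorb the vertex-dependence. First, I exploit the transparent meaning of the reindexed parameters: $r_1,r_2,r_3$ count the coordinates in which, respectively, both $x$ and $y$ differ from $v$, only $x$ differs from $v$, and only $y$ differs from $v$. Since in the Hamming cube the three pairwise distances $(d(v,x),d(v,y),d(x,y))=(r_1{+}r_2,r_1{+}r_3,r_2{+}r_3)$ already determine the ``triangle shape,'' one may rewrite
\[
W^{r_1,r_2,r_3}_{ijk}(v) \;=\; \sum_{x\in C_j,\,y\in C_k} [d(v,x)\!=\!r_1{+}r_2]\,[d(v,y)\!=\!r_1{+}r_3]\,[d(x,y)\!=\!r_2{+}r_3],
\]
where the brackets are Iverson brackets.

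Next, I sum this identity over neighbors $v'$ of $v$. A single coordinate flip of $v$ moves one position between the four zones of the triangle (of sizes $n{-}r_1{-}r_2{-}r_3,\,r_1,\,r_2,\,r_3$) in four predictable ways, yielding a four-term recurrence that expresses $\sum_{v'\sim v} W^{r_1,r_2,r_3}_{i'(v'),j,k}(v')$ as an explicit linear combination of $W^{r_1\pm 1,r_2,r_3}_{i,j,k}(v)$ and $W^{r_1,r_2\mp 1,r_3\pm 1}_{i,j,k}(v)$, with coefficients depending on $n,r_1,r_2,r_3$. Two symmetric recurrences arise by summing over neighbors of $x$ or of $y$ instead; these involve $\sum_{j'}S_{j'j}\,W^{r_1,r_2,r_3}_{i,j',k}(v)$ and $\sum_{k'}S_{k'k}\,W^{r_1,r_2,r_3}_{i,j,k'}(v)$ on their left-hand sides, respectively.

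Now I induct on the triple $(r_1,r_2,r_3)$. The base cases $r_2{=}r_3{=}0$ collapse to $[j{=}k]\,W^{r_1}_{v,j}$, which is class-invariant by the classical distance invariance of equitable partitions of distance-regular graphs (proved via $\Pi^{(r)}(A)\chi_j=\sum_i\Pi^{(r)}(S)_{ij}\chi_i$). For the inductive step, assuming class-invariance at all \emph{smaller} triples, the equitable property $\sum_{v'\sim v}[v'\in C_{i'}]=S_{ii'}$ converts each neighbor-sum on the left into a linear combination of $S$-entries and already-invariant quantities; the three parallel recurrences then form a linear system at the current level, from which every new $W^{r_1,r_2,r_3}_{ijk}(v)$ is expressible through $S$ and the inductive values alone, hence depends only on the class $i$.

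The main obstacle I anticipate is that a single four-term recurrence mixes increments and decrements, so induction on $r_1{+}r_2{+}r_3$ alone does not close; the three parallel recurrences must be solved simultaneously at each new ``level,'' and one has to verify that the resulting linear system is nondegenerate and that the degenerate boundary cases (some $r_i=0$, or $r_2{+}r_3$ exceeding $n$) are handled carefully. This is presumably where the three-variable Krawtchouk polynomials announced in the abstract enter, packaging the three recurrences into a single clean polynomial identity that generalizes $\Pi^{(r)}(A)\chi_j=\sum_i\Pi^{(r)}(S)_{ij}\chi_i$ from distance-spheres to triangles.
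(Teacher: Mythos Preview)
Your core idea---derive recurrences by counting how a single coordinate flip of one vertex of the triangle shifts $(r_1,r_2,r_3)$---is exactly the paper's, and the two recurrences you obtain by moving $x$ or $y$ are precisely equations~(\ref{eq:W2}) and~(\ref{eq:W3}). But there is a genuine gap in how you close the induction. The neighbor-of-$v$ recurrence cannot be used as you describe: its left side $\sum_{v'\sim v}W^{r_1,r_2,r_3}_{i'(v'),j,k}(v')$ is a sum of values at the \emph{current} parameter triple evaluated at \emph{other} vertices $v'$, and you cannot invoke the equitable identity $\sum_{v'\sim v}[v'\in C_{i'}]=S_{ii'}$ to collapse it unless you already know that $W^{r_1,r_2,r_3}(v')$ depends only on the class of $v'$---which is exactly the statement being proved. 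So your ``three parallel recurrences'' do not form a linear system in the unknowns at a single vertex; one of them couples distinct vertices at the same level and is circular for the induction.

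The good news is that you do not need it. Contrary to your ``main obstacle'', each of the other two recurrences has exactly one term at level $r_1+r_2+r_3+1$ (namely $W^{r_1,r_2+1,r_3}(v)$, respectively $W^{r_1,r_2,r_3+1}(v)$); the left side and all remaining right-side terms lie at level at most $r_1+r_2+r_3$. Isolating that single higher term expresses every $W^{s_1,s_2,s_3}(v)$ with $s_2>0$ or $s_3>0$ through strictly smaller-sum values at the same $v$, so induction on $r_1+r_2+r_3$ closes cleanly with base case $W^{r_1,0,0}(v)$. This is exactly the alternative proof the paper records in the remark at the end of Section~\ref{s:proof}. The three-variable Krawtchouk polynomials are not a device for rescuing a degenerate system; they are the explicit closed form of this already well-posed two-recurrence recursion. (The paper's analogue of your neighbor-of-$v$ recurrence, equation~(\ref{eq:W1}), is derived only \emph{after} strong distance invariance is in hand, via the symmetric triangle distribution $T^{r_1,r_2,r_3}$.)
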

The statement does not hold 
for distance regular graphs 
in general, 
see examples in \cite{Kro:struct}.

We will show how to calculate 
the interweight distribution 
of an equitable partition of an $n$-cube.
To formulate some of the results, 
we need to introduce a new
notion and the corresponding concept,
whose usability is briefly 
discussed in the
beginning of the next section.
Let $T^{r_1,r_2,r_3}_{ijk}$ 
denote the number of the triples $(v,x,y)$ 
such that $d(x,y)=r_2+r_3$, $d(v,y)=r_1+r_3$, $d(v,x)=r_1+r_2$, 
$v\in C_i$, $x\in C_j$, and $y\in C_k$.
We will refer to the collection of $T^{r_1,r_2,r_3}_{ijk}$ 
for all $i$, $j$, $k$, $r_1$, $r_2$, $r_3$ as the \emph{triangle distribution}
of $C$ (which can be, in this definition, 
an arbitrary family of subsets of $V(H^n)$).
Note that $W^{r_1,r_2,r_3}_{i j k} = T^{r_1,r_2,r_3}_{i j k} / |C_i| $ holds, 
due to the strong distance invariance of the equitable partitions. 
Therefore the interweight distribution can be easily calculated from the triangle distribution.
The array $T^{r_1,r_2,r_3}$ 
(and, similarly, $W^{r_1,r_2,r_3}$) will be treated 
as a row-vector of length $m^3$ whose elements 
$T^{r_1,r_2,r_3}_{ijk}$ ($W^{r_1,r_2,r_3}_{ijk}$, respectively) are indexed
by the triple $i,j,k\in \{1,\ldots,m\}$.
Next, we define three $m^3 \times m^3$ matrices 
$S'$, $S''$, $S'''$ such that 
multiplication of a row-vector 
to the matrix $S'$ 
($S''$, $S'''$, respectively) is the same 
as multiplication of the corresponding
three-indexed array 
to the matrix $S$ in the first 
(second, third, respectively) index.
That is, for $U=(U_{ijk})$ 
and $V=(V_{ijk})$:
\begin{eqnarray*}
V=US' & \Longleftrightarrow & 
V_{ijk} = \sum_{t=1}^m U_{tjk} S_{ti}
\quad\mbox{for all }i,j,k
\in\{1,\ldots,m\}, \\
V=US'' & \Longleftrightarrow & 
V_{ijk} = \sum_{t=1}^m U_{itk} S_{tj}
\quad\mbox{for all }i,j,k
\in\{1,\ldots,m\}, \\
V=US''' & \Longleftrightarrow & 
V_{ijk} = \sum_{t=1}^m U_{ijt} S_{tk}
\quad\mbox{for all }i,j,k
\in\{1,\ldots,m\}.
\end{eqnarray*}
Formally, 
$S'=S\otimes I \otimes I$, 
$S''=I \otimes S\otimes I$, 
$S'''=I\otimes I \otimes S$,  
where $I$ is the identity $m\times m$ matrix 
and $U=X \otimes Y\otimes Z$ 
denotes the $m^3 \times m^3$ matrix
with elements 
$U_{ijk,i'j'k'} = X_{i,i'}Y_{j,j'}Z_{k,k'}$.
We also define the diagonal matrix $D'$ such that
\begin{equation}\label{eq:D}
T^{r_1,r_2,r_3} = W^{r_1,r_2,r_3} D'.
\end{equation}
Formally, $D'=D\otimes I \otimes I$, where $D_{ii}=|C_i|$.

\begin{example}\label{ex:S}
If $S=\left(\begin{array}{cc} 0&3\\1&2\end{array}\right)$
and the elements of a vector are arranged as $$U=(U_{111},U_{112},U_{121},U_{122},U_{211},U_{212},U_{221},U_{222}),$$ then
$$
D'=\left(
\begin{array}{cc|cc||cc|cc}
2&0&0&0&0&0&0&0 \\
0&2&0&0&0&0&0&0 \\\hline
0&0&2&0&0&0&0&0 \\
0&0&0&2&0&0&0&0 \\\hline\hline
0&0&0&0&6&0&0&0 \\
0&0&0&0&0&6&0&0 \\\hline
0&0&0&0&0&0&6&0 \\
0&0&0&0&0&0&0&6
\end{array}
\right),
\qquad
S'=\left(
\begin{array}{cc|cc||cc|cc}
0&0&0&0&3&0&0&0 \\
0&0&0&0&0&3&0&0 \\\hline
0&0&0&0&0&0&3&0 \\
0&0&0&0&0&0&0&3 \\\hline\hline
1&0&0&0&2&0&0&0 \\
0&1&0&0&0&2&0&0 \\\hline
0&0&1&0&0&0&2&0 \\
0&0&0&1&0&0&0&2
\end{array}
\right),
$$ 
$$
S''=\left(
\begin{array}{cc|cc||cc|cc}
0&0&3&0&0&0&0&0 \\
0&0&0&3&0&0&0&0 \\\hline
1&0&2&0&0&0&0&0 \\
0&1&0&2&0&0&0&0 \\ \hline\hline
0&0&0&0&0&0&3&0 \\
0&0&0&0&0&0&0&3 \\\hline
0&0&0&0&1&0&2&0 \\
0&0&0&0&0&1&0&2
\end{array}
\right),
\qquad
S'''=\left(
\begin{array}{cc|cc||cc|cc}
0&3&0&0&0&0&0&0 \\
1&2&0&0&0&0&0&0 \\\hline
0&0&0&3&0&0&0&0 \\
0&0&1&2&0&0&0&0 \\\hline\hline
0&0&0&0&0&3&0&0 \\
0&0&0&0&1&2&0&0 \\\hline
0&0&0&0&0&0&0&3 \\
0&0&0&0&0&0&1&2
\end{array}
\right).
$$
\end{example}

\begin{lemma}\label{l:commut}
The matrices $S'$, $S''$, 
and $S'''$ commute with each other. 
The matrix $D'$ commutes with
$S''$ and $S'''$.
\end{lemma}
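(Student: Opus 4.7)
The plan is to exploit the Kronecker product presentations given right before the lemma, since all the required commutation identities reduce to the standard mixed-product property
\[
(A\otimes B\otimes C)(A'\otimes B'\otimes C')=(AA')\otimes(BB')\otimes(CC').
\]
Once this identity is invoked, each assertion is an immediate algebraic consequence, and there is essentially no combinatorial content left to verify.

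First I would write each of $S',S'',S''',D'$ in the Kronecker form provided in the excerpt, and then compute the relevant products in both orders. For example,
\[
S'S''=(S\otimes I\otimes I)(I\otimes S\otimes I)=S\otimes S\otimes I=(I\otimes S\otimes I)(S\otimes I\otimes I)=S''S',
\]
and exactly analogous three-line calculations handle $S'S'''=S'''S'=S\otimes I\otimes S$ and $S''S'''=S'''S''=I\otimes S\otimes S$. For the $D'$ part I would observe
\[
D'S''=(D\otimes I\otimes I)(I\otimes S\otimes I)=D\otimes S\otimes I=(I\otimes S\otimes I)(D\otimes I\otimes I)=S''D',
\]
and identically $D'S'''=D\otimes I\otimes S=S'''D'$. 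In every case the middle equality uses only that $IX=XI=X$ for any $X$, so the mixed-product rule collapses the computation.

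The main (and only) obstacle, if one wants to be careful, is to justify why $D'$ is \emph{not} claimed to commute with $S'$: in the first tensor slot the product is $DS\otimes I\otimes I$ one way and $SD\otimes I\otimes I$ the other, and since $D$ and $S$ need not commute as $m\times m$ matrices, this is genuinely the only pair left out. Hence no further argument is needed and the statement is established. Alternatively, one could avoid the tensor formalism altogether and verify the identities entry-wise from the summation definitions of $S',S'',S''',D'$ given in the excerpt, simply by swapping the order of two independent single-index summations; this is essentially the same proof written without the $\otimes$ notation.
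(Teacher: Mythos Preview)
Your proof is correct. The paper actually states Lemma~\ref{l:commut} without proof, treating it as immediate from the Kronecker-product descriptions $S'=S\otimes I\otimes I$, $S''=I\otimes S\otimes I$, $S'''=I\otimes I\otimes S$, $D'=D\otimes I\otimes I$; your argument via the mixed-product rule $(A\otimes B\otimes C)(A'\otimes B'\otimes C')=(AA')\otimes(BB')\otimes(CC')$ is exactly the intended one-line justification, and your remark about why $D'$ and $S'$ are excluded matches the paper's own Remark~\ref{r:noncom}.
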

%

\begin{remark}\label{r:noncom}
The matrices $D'$ and $S'$ do not commute in general, see Example~\ref{ex:S}.
\end{remark}

\section{Recursive formulas}\label{s:WT}
In the following theorem, 
we present two groups of formulas.
The first three equations, (\ref{eq:W3})--(\ref{eq:W1}),
give recursions for $W^{r_1,r_2,r_3}$;
 they give more information than formulas 
 (\ref{eq:T3})--(\ref{eq:T1}), which concern to $T^{r_1,r_2,r_3}$, 
because $W^{r_1,r_2,r_3}$ are more refined 
characteristics than $T^{r_1,r_2,r_3}$.
On the other hand,
the system of equations (\ref{eq:T3})--(\ref{eq:T1})
is symmetric with respect to all three parameters $r_1$, $r_2$, $r_3$,
and we need this symmetry to derive one of the equations from the others.
The symmetry is a key point of the proof 
and justifies the use of the triangle distribution. 
Without this trick (or a separate combinatorial proof 
of the third equation for $W^{r_1,r_2,r_3}$,
which is expected to be complicated),
we have only two formulas for $W^{r_1,r_2,r_3}$, which are sufficient to 
calculate the interweight distributions 
from the weight distributions recursively,
but not for deriving the direct formulas (Section~\ref{s:poly}).

To understand the formulas below, 
it should be noted that, 
as it follows from definitions, 
the elements of $W^{r_1,r_2,r_3}$ 
and $T^{r_1,r_2,r_3}$ are zeros
if $r_1<0$, $r_2<0$, $r_3<0$, or $r_1+r_2+r_3>n$.

\begin{theorem} \label{th:W}
Let $C=(C_1,\ldots,C_m)$ be an equitable partition 
of an $n$-cube with quotient matrix $S$. 
Then the vectors $W^{r_1,r_2,r_3}$
satisfy the following equations:
\begin{eqnarray}
W^{r_1,r_2,r_3} S''' 
  & = & (r_1+1)W^{r_1+1,r_2-1,r_3} + (r_2+1)W^{r_1-1,r_2+1,r_3} 
  \nonumber\\ 
  &&{} + (n-r_1-r_2-r_3+1)W^{r_1,r_2,r_3-1}
       + \underline{(r_3+1)W^{r_1,r_2,r_3+1}}.
       \nonumber\\    \label{eq:W3}\\
W^{r_1,r_2,r_3} S'' 
  & = & (r_1+1)W^{r_1+1,r_2,r_3-1} + (r_3+1)W^{r_1-1,r_2,r_3+1}  
  \nonumber\\ 
  &&{} + (n-r_1-r_2-r_3+1)W^{r_1,r_2-1,r_3} 
       + \underline{(r_2+1)W^{r_1,r_2+1,r_3}},
      \nonumber\\    \label{eq:W2}\\
W^{r_1,r_2,r_3} S'^{\mathrm{T}}& = & 
(r_2+1)W^{r_1,r_2+1,r_3-1} + 
(r_3+1)W^{r_1,r_2-1,r_3+1}  \nonumber\\ &&{} + 
(n-r_1-r_2-r_3+1)W^{r_1-1,r_2,r_3} + 
\underline{(r_1+1)W^{r_1+1,r_2,r_3}}.
\nonumber\\     \label{eq:W1}
\end{eqnarray}
The triangle distribution
satisfies the following equations:
\begin{eqnarray}
T^{r_1,r_2,r_3} S''' & = & 
(r_1+1)T^{r_1+1,r_2-1,r_3} + 
(r_2+1)T^{r_1-1,r_2+1,r_3} \nonumber\\ &&{} + 
(n-r_1-r_2-r_3+1)T^{r_1,r_2,r_3-1} + 
\underline{(r_3+1)T^{r_1,r_2,r_3+1}},
\nonumber\\     \label{eq:T3}\\
T^{r_1,r_2,r_3} S'' & = & 
(r_1+1)T^{r_1+1,r_2,r_3-1} + 
(r_3+1)T^{r_1-1,r_2,r_3+1}  \nonumber\\ &&{} +  
(n-r_1-r_2-r_3+1)T^{r_1,r_2-1,r_3} + 
\underline{(r_2+1)T^{r_1,r_2+1,r_3}},
\nonumber\\     \label{eq:T2}\\
T^{r_1,r_2,r_3}  S' & = & 
(r_2+1)T^{r_1,r_2+1,r_3-1} + 
(r_3+1)T^{r_1,r_2-1,r_3+1}  \nonumber\\ &&{} + 
(n-r_1-r_2-r_3+1)T^{r_1-1,r_2,r_3} + 
\underline{(r_1+1)T^{r_1+1,r_2,r_3}}.
\nonumber\\     \label{eq:T1}
\end{eqnarray}
\end{theorem}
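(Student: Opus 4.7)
The plan is to prove one of the six equations by a direct combinatorial argument and to deduce the remaining five by symmetry and a small algebraic manipulation. The key geometric observation is that for any triple $(v,x,y)\in\{0,1\}^n\times\{0,1\}^n\times\{0,1\}^n$, each of the $n$ coordinates falls into exactly one of four mutually exclusive types: (A) all three bits agree; (B) $v$ is the unique outlier (so $x=y\ne v$); (C) $x$ is the unique outlier; (D) $y$ is the unique outlier. The type counts equal $n-r_1-r_2-r_3$, $r_1$, $r_2$, $r_3$ respectively, which is exactly the content of the reparametrization used in the paper.

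The first task is to prove (\ref{eq:W3}) directly. Unfolding definitions, $(W^{r_1,r_2,r_3} S''')_{ijk}=\sum_t W^{r_1,r_2,r_3}_{ijt} S_{tk}$ equals, for a fixed $v\in C_i$, the number of triples $(x,y,y')$ with $x\in C_j$, $y'\in C_k$, $y'\sim y$ in $H^n$, and $(v,x,y)$ having parameters $(r_1,r_2,r_3)$. I would regroup this sum by fixing $(v,x,y')$ and letting $y$ range over the $n$ neighbors of $y'$. A short case check shows: flipping a type-(A) (respectively (B), (C), (D)) position of $(v,x,y')$ creates a type-(D) (respectively (C), (B), (A)) position in $(v,x,y)$, with all other positions unchanged. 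Solving for the parameters of $(v,x,y')$ in each of the four cases in terms of the target $(r_1,r_2,r_3)$, and reading off the type-counts, produces exactly the four terms on the right-hand side of (\ref{eq:W3}) with the stated coefficients $n-r_1-r_2-r_3+1$, $r_1+1$, $r_2+1$, $r_3+1$.

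Equation (\ref{eq:W2}) follows from (\ref{eq:W3}) via the involution $(x,y)\mapsto(y,x)$, which gives the identity $W^{r_1,r_2,r_3}_{ijk}=W^{r_1,r_3,r_2}_{ikj}$; substituting this on both sides of (\ref{eq:W3}) and relabelling $(j,k)\leftrightarrow(k,j)$ converts the $S'''$-action into an $S''$-action and produces (\ref{eq:W2}). The triangle-distribution equations (\ref{eq:T3}) and (\ref{eq:T2}) follow by multiplying (\ref{eq:W3}) and (\ref{eq:W2}) on the right by $D'$ and invoking Lemma~\ref{l:commut} to move $D'$ past $S'''$ and $S''$: the left-hand sides become $T^{r_1,r_2,r_3}S'''$ and $T^{r_1,r_2,r_3}S''$, while each $W^{\cdots}D'$ on the right equals $T^{\cdots}$. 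Equation (\ref{eq:T1}) then requires the fact, absent from the $W$-picture but manifest for $T$, that $T$ is symmetric under any permutation of $(v,x,y)$: the transposition $v\leftrightarrow x$ yields $T^{r_1,r_2,r_3}_{ijk}=T^{r_2,r_1,r_3}_{jik}$, and applying this to (\ref{eq:T2}) converts its $S''$-action into an $S'$-action and delivers (\ref{eq:T1}).

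Finally, (\ref{eq:W1}) is recovered from (\ref{eq:T1}) using the equitable-partition double-counting identity $|C_i| S_{ij}=|C_j| S_{ji}$, i.e.\ $DS=S^{\mathrm{T}} D$, which tensors up to $D' S'=S'^{\mathrm{T}} D'$. Hence $T^{r_1,r_2,r_3} S'=W^{r_1,r_2,r_3} D' S'=W^{r_1,r_2,r_3} S'^{\mathrm{T}} D'$; substituting this on the left and $T^{\cdots}=W^{\cdots} D'$ on the right of (\ref{eq:T1}) and cancelling the invertible factor $D'$ yields (\ref{eq:W1}). The only substantive step is the four-case combinatorial analysis behind (\ref{eq:W3})—conceptually simple but the main bookkeeping hazard; everything else is formal, using only the $x\leftrightarrow y$ symmetry of $W$, the full symmetry of $T$ in $(v,x,y)$, and the commutation relations between $D'$ and $S'$, $S''$, $S'''$ recorded in Lemma~\ref{l:commut}.
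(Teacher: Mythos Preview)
Your proposal is correct and follows essentially the same route as the paper. The combinatorial argument you give for (\ref{eq:W3})---classifying coordinates of a triple $(v,x,y)$ into four types and tracking how a single bit-flip in $y$ moves between them---is exactly the content of the paper's Section~\ref{s:proof} computation of the multi-neighborhood of $H_{v,x}^{r_1,r_2,r_3}$, just without the ``spectrum/multi-neighborhood'' vocabulary; and your chain (\ref{eq:W3})$\Rightarrow$(\ref{eq:W2})$\Rightarrow$(\ref{eq:T3}),(\ref{eq:T2})$\Rightarrow$(\ref{eq:T1})$\Rightarrow$(\ref{eq:W1}) via the $(x,y)$-symmetry of $W$, the full $(v,x,y)$-symmetry of $T$, Lemma~\ref{l:commut}, and $D'S'=S'^{\mathrm T}D'$ matches the paper's deduction step for step (the paper phrases (\ref{eq:W2}) as ``obtained in the same manner'' rather than by symmetry, but this is cosmetic).
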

\begin{proof}
We will prove equation (\ref{eq:W3}) separately, in Section~\ref{s:proof}.
Equation (\ref{eq:W2}) can be obtained in the same manner.
Since 
$W^{r_1,r_2,r_3}D'=T^{r_1,r_2,r_3}$ 
and $D'$ commutes with $S''$ and $S'''$ (Lemma~\ref{l:commut}), 
we see that  (\ref{eq:T3}) and (\ref{eq:T2}) 
are straightforward from  (\ref{eq:W3}) and (\ref{eq:W2}).
By analogy, (\ref{eq:T1}) holds too (indeed, 
$T^{r_1,r_2,r_3}_{ijk}=T^{r_2,r_1,r_3}_{jik}$, by definition).
Using again
$W^{r_1,r_2,r_3}D'=T^{r_1,r_2,r_3}$ 
and noting that $D' S' D'^{-1}=S'^{\mathrm{T}}$
(the last follows from $D S D^{-1}=S^{\mathrm{T}}$, 
which is the same as $|C_i|S_{i,j}=|C_j|S_{j,i}$, $i,j=1,\ldots,m$), 
we derive (\ref{eq:W1}) from (\ref{eq:T1}).
\qed\end{proof}

The collection 
$(W^{r_1,r_2,r_3})_{r_1,r_2,r_3=0}^d
=((W^{r_1,r_2,r_3}_{ijk})_{i,j,k=1}^{m})_{r_1,r_2,r_3=0}^d$, 
can be treated as $m$ interweight distributions, 
accordingly with different values of $i$.
For a vertex $v$, the interweight 
distribution of $C$ with respect to $v$ is given
by the values $W^{r_1,r_2,r_3}_{ijk}$ with fixed $i$ such that $v\in C_i$. 
By the definition, 
$W^{0,0,0}_{iii}=1$, $i=1,\ldots,m$, 
and the other entries of $W^{0,0,0}$ are zeros.
Formulas (\ref{eq:W3})--(\ref{eq:W1}) express $W^{l_1,l_2,l_3}$ 
(in the underlined parts of formulas) 
as a combination of arrays 
with smaller index sum $l_1+l_2+l_3$; 
so, all the values are calculated recursively.
The situation with the triangle distribution
is similar with the only difference in the initial values:
$T^{0,0,0}_{iii}=|C_i|$, $i=1,\ldots,m$.
(Note also that, because of the obvious relations 
$|C_i|S_{i,j}=|C_j|S_{j,i}$ and $|C_1|+\ldots+|C_m|=2^n$,
the values $|C_i|$, $i=1,\ldots,m$, 
are derived from the quotient matrix $S$.)

\section{A proof of the recursion}\label{s:proof}
Before proving (\ref{eq:W3}),
we define two auxiliary concepts.

Given a collection $C=(C_{1},\ldots,C_{m})$
of subsets of the vertex set of a graph,
the \emph{spectrum} of a vertex set $X$ with respect to $C$
is the $k$-tuple ${\mathrm{Sp}_C}(X)=(x_1,\ldots,x_m)$, 
where $x_i = |X\cap C_{i}|$ 
(intuitively, we can treat $C_{1}$, \ldots, $C_{m}$ 
as colors and think about the color spectrum).
If $X$ is a multiset, 
then $x_i$ is defined as the sum over $C_{i}$ 
of the multiplicities in $X$.

The \emph{multi-neighborhood} $[X]$ of a vertex set $X$ 
is a multiset of vertices of the graph, 
where the multiplicity of a vertex 
is calculated as the number of its neighbors from $X$.
In other words, $[X]=\uplus_{x\in X}[x]$ 
where $[x]$ is the neighborhood of the vertex $x$
and $\uplus$ is the multiset union.

\begin{lemma}\label{l:spe}
 Let $C$ be an equitable partition 
 (of an arbitrary graph)
 with quotient matrix $S$.
 For every vertex set $X$,
 $$ {\mathrm{Sp}_C}([X]) = {\mathrm{Sp}_C}(X) \cdot S.$$
\end{lemma}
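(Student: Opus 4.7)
The plan is to prove the identity by a direct double-counting argument, unfolding the definitions of spectrum, multi-neighborhood, and equitable partition, and then exchanging the order of summation.

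First I would write out the $j$-th component of ${\mathrm{Sp}_C}([X])$ from the definitions. By the definition of the spectrum of a multiset, this component equals the sum over $y\in C_j$ of the multiplicity of $y$ in $[X]$. Since $[X]=\uplus_{x\in X}[x]$, the multiplicity of $y$ in $[X]$ is precisely $|\{x\in X:x\sim y\}|$. Hence the $j$-th component equals
\[
\sum_{y\in C_j}\bigl|\{x\in X:x\sim y\}\bigr|.
\]

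Next I would reinterpret this sum as counting ordered pairs $(x,y)$ with $x\in X$, $y\in C_j$, and $x\sim y$, and then switch the order of summation to get
\[
\sum_{x\in X}\bigl|\{y\in C_j:y\sim x\}\bigr|.
\]
Partitioning $X$ as $X=\biguplus_{i=1}^m(X\cap C_i)$ and invoking the defining property of the equitable partition — namely that for any $x\in C_i$ the number of its neighbors in $C_j$ equals $S_{ij}$, independent of $x$ — the inner count is $S_{ij}$ whenever $x\in C_i$. Therefore the whole expression becomes
\[
\sum_{i=1}^m |X\cap C_i|\,S_{ij}=\sum_{i=1}^m x_i\,S_{ij}=\bigl({\mathrm{Sp}_C}(X)\cdot S\bigr)_j,
\]
which is the desired identity. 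The argument works for multiset $X$ as well, with $x_i$ interpreted as the sum of multiplicities in $C_i$.

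There is no real obstacle here: the proof is pure bookkeeping, and the only thing to be careful about is the multiset interpretation of $[X]$, which ensures that a vertex $y$ is counted once for each of its neighbors in $X$ and thus makes the double count come out correctly. I would emphasize this in one sentence to justify the first equality.
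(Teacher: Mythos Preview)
Your argument is correct and is essentially the same double-counting/linearity proof as in the paper; the only cosmetic difference is that the paper works at the level of the full spectrum vector (first noting ${\mathrm{Sp}_C}([x])={\mathrm{Sp}_C}(\{x\})\cdot S$ for a single vertex, then summing over $x\in X$ using the additivity of ${\mathrm{Sp}_C}$ over multiset unions), whereas you unpack the $j$-th coordinate explicitly.
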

\begin{proof}
 By the definitions of an equitable partition, the multi-neighborhood, and the spectrum, we have 
 ${\mathrm{Sp}_C}([x]) = {\mathrm{Sp}_C}(\{x\})\cdot S$ for every vertex $x$.
Then,
 $$ {\mathrm{Sp}_C}([X]) 
 = {\mathrm{Sp}_C}(\uplus_{x\in X}[x]) 
 = \sum_{x\in X} {\mathrm{Sp}_C}([x]) 
 = \sum_{x\in X} {\mathrm{Sp}_C}(\{x\})\cdot S 
 = {\mathrm{Sp}_C}(X) \cdot S.$$
\qed\end{proof}

Now, we are ready to prove (\ref{eq:W3}).
For fixed vertices $v$ and $x$ of the $n$-cube, 
denote by $H_{v,x}^{r_1,r_2,r_3}$ the set of vertices $y$ such that
$d(v,y) = r_1+r_3$, $d(x,y)=r_2+r_3$, $d(v,x)=r_1+r_2$
(we do not restrict the values 
of the parameters $r_1$, $r_2$, $r_3$,
but note that by the definition $H_{v,x}^{r_1,r_2,r_3}$ 
is nonempty only for nonnegative $r_1$, $r_2$, $r_3$ 
satisfying $r_1+r_2=d(v,x)$ and $r_1+r_2+r_3 \leq n$).

Every vertex of $H_{v,x}^{r_1,r_2,r_3}$ has 
$r_1$ neighbors from $H_{v,x}^{r_1-1,r_2+1,r_3}$,
$r_2$ neighbors from $H_{v,x}^{r_1+1,r_2-1,r_3}$,
$r_3$ neighbors from $H_{v,x}^{r_1,r_2,r_3-1}$,
$n-r_1-r_2-r_3$ neighbors from $H^{r_1,r_2,r_3+1}$,
and no other neighbors
(to see this, we can consider without loss of generality that 
$v=0^n$, $x=1^{r_1+r_2}0^{n-r_1-r_2}$, 
$y=0^{r_2} 1^{r_1+r_3} 0^{n-r_1-r_2-r_3}$).

By the definition of the multi-neighborhood, we have
$$
[H_{v,x}^{r_1,r_2,r_3}]  
=  l_1 H_{v,x}^{r_1+1,r_2-1,r_3} 
\uplus l_2 H_{v,x}^{r_1-1,r_2+1,r_3}  
\uplus l_3 H_{v,x}^{r_1,r_2,r_3+1}
\uplus l_0 H_{v,x}^{r_1,r_2,r_3-1}
$$
where 
$l_1 = r_1+1$, $l_2 = r_2+1$, 
$l_3 = r_3+1$, $l_0 = n-r_1-r_2-r_3+1$.
Considering the spectrum of each side of the equation, 
applying Lemma~\ref{l:spe}, 
and denoting 
$S_{v,x}^{r_1,r_2,r_3} = {\mathrm{Sp}_C}(H_{v,x}^{r_1,r_2,r_3})$,
we get the following:
$$
S_{v,x}^{r_1,r_2,r_3} \cdot S 
=  l_1 S_{v,x}^{r_1+1,r_2-1,r_3} 
+ l_2 S_{v,x}^{r_1-1,r_2+1,r_3} 
+ l_3 S_{v,x}^{r_1,r_2,r_3+1}
+ l_0 S_{v,x}^{r_1,r_2,r_3-1}.
$$
Summarizing the last equation over all $x$ from $C_j$, we find
\begin{eqnarray}
W_{v,j}^{r_1,r_2,r_3} \cdot S  
&=&  l_1 W_{v,j}^{r_1+1,r_2-1,r_3} 
+ l_2 W_{v,j}^{r_1-1,r_2+1,r_3} 
+ l_3 \underline{W_{v,j}^{r_1,r_2,r_3+1}}
+ l_0 W_{v,j}^{r_1,r_2,r_3-1} \nonumber\\ \label{eq:vj}
\end{eqnarray}
where 
$W_{v,j}^{r_1,r_2,r_3} = (W_{v,j1}^{r_1,r_2,r_3},\ldots,W_{v,jm}^{r_1,r_2,r_3})$
and 
$W_{v,jk}^{r_1,r_2,r_3}$ 
denotes the number of the pairs $(x,y)$ 
of vertices such that
$d(v,y) = r_1+r_3$, 
$d(x,y)=r_2+r_3$, 
$d(v,x)=r_1+r_2$, 
$x\in C_j$, 
$y \in C_k$.
Because of the strong distance invariance, 
$W_{v,jk}^{r_1,r_2,r_3}$ depends on $i$ such that $v\in C_i$
and does not depend on the choice of $v$ from $C_i$.
That is, $W_{v,jk}^{r_1,r_2,r_3} = W_{ijk}^{r_1,r_2,r_3}$,
and (\ref{eq:vj}) coincides with (\ref{eq:W3}). 
The proof is over.
\begin{remark}[an alternative proof of Theorem~\ref{th:Vas}]
Formula (\ref{eq:vj}) allows to express the values $W_{v,jk}^{r_1,r_2,r_3+1}$ 
through  $W_{v,jk}^{l_1,l_2,l_3}$ with different $l_1$, $l_2$, $l_3$ satisfying
$l_1+l_2+l_3<r_1+r_2+r_3+1$. 
Since $W_{v,jk}^{r_1,r_2,r_3}=W_{v,kj}^{r_1,r_3,r_2}$,
we can say the same about 
$W_{v,jk}^{r_1,r_2+1,r_3}$. 
As a result, we can 
calculate $W_{v,jk}^{r_1,r_2,r_3}$ 
recursively, starting from 
$W_{v,jk}^{l_1,0,0}$, $l_1=0,\ldots,n$, 
i.e., from the weight distribution 
of the partition with respect to $v$. 
But the weight distribution depends
only on $C_i$ that contains $v$ 
and does not depend on the choice of $v$.
We conclude that the same is true 
for the interweight distribution.
This gives another proof of 
Theorem~\ref{th:Vas} and makes our theory 
self-contained (well, we still use the distance invariance,
but it is clear that formulas for the weight distribution
can be obtained using the technique of Section~\ref{s:proof}).
 
\end{remark}

\section{The polynomials}\label{s:poly}
In this section, 
we derive a direct formula and the enumerator for $T^{r_1,r_2,r_3}$.
Utilizing (\ref{eq:D}) or the similarity between the systems of equations
(\ref{eq:W3})--(\ref{eq:W1}) and (\ref{eq:T3})--(\ref{eq:T1}),
one can easily see that corresponding formulas for $W^{r_1,r_2,r_3}$ are
obtained by replacing $T$ by $W$ and $S'$ by $S'^{\mathrm{T}}$.

In further considerations, 
we will use the following degenerated 
but important case of equitable partitions.
By the \emph{singleton partition} 
of a graph $G=(V(G),E(G))$, 
we will mean the partition $(\{x\})_{x\in V(G)}$ 
of the vertex set into sets of cardinality one.
The singleton partition is obviously equitable,
and its quotient matrix $S$ 
coincides with the graph adjacency matrix.

\begin{lemma}\label{l:poly}
For every nonnegative integers $n$, $r_1$, $r_2$, $r_3$ 
meeting $r_1+r_2+r_3\leq n$, there is
a unique polynomial $P^{r_1,r_2,r_3}(x,y,z)$ 
of degree at most $r_1+r_2+r_3$ such that the equation
\begin{equation}\label{eq:TP}
T^{r_1,r_2,r_3}=T^{0,0,0}P^{r_1,r_2,r_3}(S',S'',S''')
\end{equation}
holds for every equitable partition of the $n$-cube 
and its quotient matrix $S$.
\end{lemma}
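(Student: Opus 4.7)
The plan is to prove existence and uniqueness separately. Existence follows by an immediate induction on $s=r_1+r_2+r_3$ using the recursions of Theorem~\ref{th:W}. Uniqueness is obtained by specializing to the singleton partition of $H^n$, for which the vectors $T^{r_1,r_2,r_3}$ are manifestly linearly independent.

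For existence I would induct on $s$, starting from $P^{0,0,0}(x,y,z)=1$. For the inductive step, fix $(r_1,r_2,r_3)$ with $r_1+r_2+r_3=s+1$; since at least one coordinate is positive, I choose the corresponding recursion from (\ref{eq:T3})--(\ref{eq:T1}) in which $T^{r_1,r_2,r_3}$ plays the role of the underlined term (with $r_3-1$, $r_2-1$, or $r_1-1$ substituted into the recursion), and solve for it. Each remaining term on the right is either a $T^{l_1,l_2,l_3}$ with $l_1+l_2+l_3\le s$, or the product of such a $T$ with one of $S'$, $S''$, $S'''$. By the induction hypothesis the former equal $T^{0,0,0}$ times a polynomial in $S'$, $S''$, $S'''$ of degree at most $l_1+l_2+l_3$; right-multiplying by one of $S'$, $S''$, $S'''$ raises the degree by one, so the resulting linear combination has the desired form $T^{0,0,0}P^{r_1,r_2,r_3}(S',S'',S''')$ with $\deg P^{r_1,r_2,r_3}\le s+1$.

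For uniqueness I would invoke the singleton partition $(\{v\})_{v\in V(H^n)}$, for which $T^{0,0,0}_{ijk}=\delta_{ij}\delta_{jk}$ and $T^{r_1,r_2,r_3}_{ijk}$ is the indicator that the vertex triple $(i,j,k)$ realizes the three pairwise distances prescribed by $(r_1,r_2,r_3)$. Since the pairwise distances of a triple are uniquely determined, these indicators have pairwise disjoint supports for distinct $(r_1,r_2,r_3)$, and whenever $r_1+r_2+r_3\le n$ an explicit triple of binary words realizes the prescribed distances, so each $T^{r_1,r_2,r_3}$ is nonzero. Therefore $\{T^{r_1,r_2,r_3}:r_1+r_2+r_3\le d\}$ is linearly independent for every $d\le n$, and its cardinality $\binom{d+3}{3}$ equals the dimension of the space of polynomials of degree at most $d$ in three variables. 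Combined with existence, this forces $\{P^{l_1,l_2,l_3}:l_1+l_2+l_3\le d\}$ to be a basis of that space. Consequently, if a polynomial $Q$ of degree at most $d=r_1+r_2+r_3$ also satisfies (\ref{eq:TP}) for every equitable partition, then in particular for the singleton partition $T^{0,0,0}(Q-P^{r_1,r_2,r_3})(S',S'',S''')=0$; expanding $Q-P^{r_1,r_2,r_3}$ in the basis and invoking the linear independence of the $T^{l_1,l_2,l_3}$ yields $Q=P^{r_1,r_2,r_3}$.

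The main obstacle is bookkeeping in the inductive step: one must verify that for each of the three possible choices of recursion the right-hand side involves only $T$-vectors of index sum at most $s$, which is a direct inspection of (\ref{eq:T3})--(\ref{eq:T1}). The more substantive conceptual point is that the singleton partition alone is rich enough to pin down every polynomial of degree at most $n$ that could serve as $P^{r_1,r_2,r_3}$, and it is this richness that makes the uniqueness claim non-trivial and, ultimately, justifies the direct formulas extracted from the $P^{r_1,r_2,r_3}$ in the sequel.
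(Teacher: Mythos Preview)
Your proposal is correct and follows essentially the same route as the paper: existence by induction on $r_1+r_2+r_3$ via the recursions (\ref{eq:T3})--(\ref{eq:T1}), and uniqueness by specializing to the singleton partition, where the $T^{r_1,r_2,r_3}$ have pairwise disjoint supports and a dimension count against $\binom{d+3}{3}$ forces the map $P\mapsto T^{0,0,0}P(S',S'',S''')$ to be injective. The one point you leave implicit is the commutativity of $S'$, $S''$, $S'''$ (Lemma~\ref{l:commut}), which is what guarantees that right-multiplying $T^{0,0,0}P(S',S'',S''')$ by one of these matrices again has the form $T^{0,0,0}Q(S',S'',S''')$ for a \emph{commutative} polynomial $Q$.
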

\begin{proof}
Since
the matrices $S'$, $S''$, $S'''$ commute with each other
(Lemma~\ref{l:commut}),
the existence of the polynomial follows 
by induction from (\ref{eq:T3})--(\ref{eq:T1}).
It remains to prove the uniqueness, which is not straightforward; 
indeed, the recursion is three-parametric,
and some values can be obtained in more than one way.

Since (\ref{eq:TP}) must hold for every equitable partition,
it is sufficient to prove the uniqueness for a fixed one. 
Let us consider the singleton partition; that is, 
$S$ is a graph adjacency matrix.

(I) We first note that the dimension 
of the vector space 
of all polynomials of degree at most $n$
in three variables 
equals $\left(n+3 \atop 3 \right)$.
This is the number of monomials of type 
$x^{r_1}y^{r_2}z^{r_3}$ of degree at most $n$,
which form a basis.

(II) Then, we see that all 
$T^{r_1,r_2,r_3}$, $r_1\geq 0$, $r_2\geq 0$, 
$r_3\geq 0$, $r_1+r_2+r_3\leq n$,
are linearly independent.
Indeed, for every vertices $i$, $j$, $k$, 
there is exactly one vector
$T^{r_1,r_2,r_3}$ 
(namely, such that the distances 
between the vertices $i$ and $j$, $j$ and $k$, $i$ and $k$ equal 
$r_1+r_2$, $r_2+r_3$, $r_1+r_3$, respectively) 
with $T^{r_1,r_2,r_3}_{ijk} \neq 0$.
The number of different non-zero $T^{r_1,r_2,r_3}$ 
is $\left(n+3 \atop 3 \right)$ again.

We can conclude from (I) and (II) that the linear map 
$P \to T^{0,0,0}P(S',S'',S''')$, from the space 
of all polynomials of degree at most $n$
in three variables 
to the vector space generated by all $T^{r_1,r_2,r_3}$,
is nonsingular.
Indeed, the dimensions of both spaces coincide, 
and the image contains a basis from 
$T^{r_1,r_2,r_3}=T^{0,0,0}P^{r_1,r_2,r_3}(S',S'',S''')$.
Hence, every $T^{r_1,r_2,r_3}$ 
is represented as $T^{0,0,0}P(S',S'',S''')$, 
where the degree of $P$ is not greater than $n$,
in only one way.
\qed\end{proof}
\begin{theorem}\label{th:gen}
The generating function 
$$
f(X,Y,Z)=\sum_{\!\!\!\!\!r_1,r_2,r_3\!\!\!\!\!}
P^{r_1,r_2,r_3}(x,y,z)X^{r_1}Y^{r_2}Z^{r_3}
$$
of the polynomials $P^{r_1,r_2,r_3}$ 
satisfying {\rm(\ref{eq:TP})} 
for every equitable partition has the form 
\begin{eqnarray*}
f(X,Y,Z)&=&
(1+X+Y+Z)^{\frac{n+x+y+z}4} 
(1+X-Y-Z)^{\frac{n+x-y-z}4} 
 \\ && \times
(1-X+Y-Z)^{\frac{n-x+y-z}4} 
(1-X-Y+Z)^{\frac{n-x-y+z}4}. 
\end{eqnarray*}
\end{theorem}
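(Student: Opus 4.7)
The plan is to translate each of the three recursions (\ref{eq:T3})--(\ref{eq:T1}) for $P^{r_1,r_2,r_3}$ into a first-order linear PDE for $f$, verify by logarithmic differentiation that the proposed product form satisfies them, and invoke the uniqueness from Lemma~\ref{l:poly}.

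First I would take the polynomial version of (\ref{eq:T3}),
$$z P^{r_1,r_2,r_3} = (r_1+1) P^{r_1+1,r_2-1,r_3} + (r_2+1) P^{r_1-1,r_2+1,r_3} + (n-r_1-r_2-r_3+1) P^{r_1,r_2,r_3-1} + (r_3+1) P^{r_1,r_2,r_3+1},$$
multiply it by $X^{r_1}Y^{r_2}Z^{r_3}$, and sum over $r_1,r_2,r_3 \geq 0$. Routine index shifts turn the four right-hand terms into $Y f_X$, $X f_Y$, $Z(nf - X f_X - Y f_Y - Z f_Z)$, and $f_Z$ respectively, yielding the PDE
$$(z - nZ)\,f = (Y - XZ)\,f_X + (X - YZ)\,f_Y + (1 - Z^2)\,f_Z.$$
The recursions (\ref{eq:T2}) and (\ref{eq:T1}) produce two analogues, obtained from this one by the $S_3$-action that simultaneously permutes $(X,Y,Z)$ and $(x,y,z)$.

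Next I would verify that the proposed $f = a^\alpha b^\beta c^\gamma d^\delta$, with $a = 1+X+Y+Z$, $b = 1+X-Y-Z$, $c = 1-X+Y-Z$, $d = 1-X-Y+Z$ and $\alpha,\beta,\gamma,\delta$ the corresponding exponents from the statement, satisfies this PDE. Logarithmic differentiation gives
$$f_X/f = \alpha/a + \beta/b - \gamma/c - \delta/d,$$
and analogous expressions for $f_Y/f$ and $f_Z/f$ whose signs are dictated by the gradients of $a,b,c,d$. The heart of the argument is a quartet of factorization identities: in the combination $(Y-XZ)f_X/f + (X-YZ)f_Y/f + (1-Z^2)f_Z/f$, the coefficient of $1/a$ works out to $(1-Z)\,a$, the coefficient of $1/b$ to $-(1+Z)\,b$, the coefficient of $1/c$ to $-(1+Z)\,c$, and the coefficient of $1/d$ to $(1-Z)\,d$ --- each a routine three-term check. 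After these cancellations, and using $\alpha+\delta = (n+z)/2$ and $\beta+\gamma = (n-z)/2$, the right-hand side collapses to $(1-Z)(\alpha+\delta) - (1+Z)(\beta+\gamma) = z - nZ$, matching the left-hand side. The other two PDEs then follow from the manifest $S_3$-invariance of the proposed $f$ under the simultaneous permutation of $(X,Y,Z)$ and $(x,y,z)$: for instance, swapping $Y \leftrightarrow Z$ with $y \leftrightarrow z$ exchanges $c$ with $d$ and $\gamma$ with $\delta$ while fixing $a,b,\alpha,\beta$.

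Finally, $f(0,0,0) = 1 = P^{0,0,0}(x,y,z)$, and by Lemma~\ref{l:poly} the polynomials $P^{r_1,r_2,r_3}$ are uniquely determined, so the three PDEs together with this initial value pin down the generating function. I expect the main obstacle to be the quartet of factorization identities in the preceding paragraph --- it is not a priori evident that $(Y-XZ) + (X-YZ) + (1-Z^2)$ should split as $(1-Z)\,a$, and the theorem ultimately rests on these clean cancellations, which reflect the hidden orthogonal structure behind the four sign patterns $(\pm,\pm,\pm,\pm)$ of the exponents.
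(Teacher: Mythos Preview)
Your proposal is correct and follows essentially the same route as the paper: derive a first-order linear PDE from one of the recursions (the paper uses (\ref{eq:T1}) rather than (\ref{eq:T3}), but these are related by your $S_3$-symmetry), verify the product formula satisfies it, and conclude via the initial condition $f(0,0,0)=1$ and the uniqueness in Lemma~\ref{l:poly}. The one substantive difference is that the paper explicitly declines to carry out the verification (``we omit the straightforward but bulky check, \ldots which can be verified by computer''), whereas your logarithmic-differentiation argument with the four factorization identities $(Y-XZ)\pm(X-YZ)\pm(1-Z^2)=(1\mp Z)\cdot(\text{factor})$ gives a clean hand proof; this is a genuine improvement in exposition over the paper's treatment.
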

\begin{proof}
From (\ref{eq:T1}) and Lemma~\ref{l:poly}, the polynomials
$P^{r_1,r_2,r_3}$ satisfy
\begin{eqnarray} 
xP^{r_1,r_2,r_3}(x,y,z) &=& 
(r_2+1)P^{r_1,r_2+1,r_3-1}(x,y,z) 
\nonumber\\ 
&+&
(r_3+1)P^{r_1,r_2-1,r_3+1}(x,y,z) 
\nonumber\\ 
&+&
(n-r_1-r_2-r_3+1)P^{r_1-1,r_2,r_3}(x,y,z)\nonumber\\ 
&+&
(r_1+1)P^{r_1+1,r_2,r_3}(x,y,z).\label{eq:P1}
\end{eqnarray}
Multiplying by $X^{r_1}Y^{r_2}Z^{r_3}$ 
and summing over all $r_1$, $r_2$, $r_3$ 
from $0$ to $\infty$, 
we get
\begin{eqnarray*}
x \sum_{\!\!\!\!\!r_1,r_2,r_3\!\!\!\!\!}
  P^{r_1,r_2,r_3}X^{r_1}Y^{r_2}Z^{r_3} &=& 
Z\sum_{\!\!\!\!\!r_1,r_2,r_3\!\!\!\!\!}
  (r_2+1)P^{r_1,r_2+1,r_3-1}X^{r_1}Y^{r_2}Z^{r_3-1} 
  \\ &&{}+
Y\sum_{\!\!\!\!\!r_1,r_2,r_3\!\!\!\!\!}
  (r_3+1)P^{r_1,r_2-1,r_3+1}X^{r_1}Y^{r_2-1}Z^{r_3} 
  \\ &&{}+
nX \sum_{\!\!\!\!\!r_1,r_2,r_3\!\!\!\!\!}
  P^{r_1-1,r_2,r_3}(x,y,z)X^{r_1-1}Y^{r_2}Z^{r_3}
\\ &&{}- 
X^2\sum_{\!\!\!\!\!r_1,r_2,r_3\!\!\!\!\!}
  (r_1-1)P^{r_1-1,r_2,r_3}X^{r_1-2}Y^{r_2}Z^{r_3}
\\ &&{}-
 XY\sum_{\!\!\!\!\!r_1,r_2,r_3\!\!\!\!\!}
  r_2P^{r_1-1,r_2,r_3}X^{r_1-1}Y^{r_2-1}Z^{r_3}
\\&&{}-
XZ\sum_{\!\!\!\!\!r_1,r_2,r_3\!\!\!\!\!}
  r_3P^{r_1-1,r_2,r_3}X^{r_1-1}Y^{r_2}Z^{r_3-1}
\\&&{}+\sum_{\!\!\!\!\!r_1,r_2,r_3\!\!\!\!\!}
(r_1+1)P^{r_1+1,r_2,r_3}X^{r_1}Y^{r_2}Z^{r_3}.
\end{eqnarray*}
Next, denoting 
$f(X,Y,Z)=\sum_{r_1,r_2,r_3=0}^{\infty}
P^{r_1,r_2,r_3}(x,y,z)X^{r_1}Y^{r_2}Z^{r_3}
$ 
and implying $P^{r_1,r_2,r_3}(x,y,z)=0$ whenever 
$r_1<0$, $r_2<0$, or $r_3<0$, we obtain
\begin{eqnarray*}
(x-nX)f(X,Y,Z) &=& 
  (1-X^2)\frac{\partial}{\partial X}f(X,Y,Z)
  \\&&{}
+(Z-XY)\frac{\partial}{\partial Y}f(X,Y,Z)
+(Y-XZ)\frac{\partial}{\partial Z}f(X,Y,Z).
\end{eqnarray*}
The formula from the statement of the theorem 
satisfies this differential equation 
(we omit the straightforward but bulky check, 
but note that the most complicated part of the check is comparing polynomials, 
which can be verified by computer, for example, using GAP \cite{GAP}). 
In particular, this means that its Taylor coefficients satisfy 
the recursion (\ref{eq:P1}). 
Similarly, they satisfy the recursions derived from
(\ref{eq:T3}) and (\ref{eq:T2}). It remains to note that 
the coefficient at $X^0Y^0Z^0$ is $1$, as desired.
\qed\end{proof}

\begin{corollary} For all integer 
$r_1\geq 0$,
$r_2\geq 0$,
$r_3\geq 0$,
\begin{eqnarray*}
P^{r_1,r_2,r_3}(x,y,z) 
&=& \sum_{{i_1,j_1,k_1:i_1+j_1+k_1\leq r_1}
\atop{{i_2,j_2,k_2:i_2+j_2+k_2\leq r_2}
\atop{i_3,j_3,k_3:i_3+j_3+k_3\leq r_3}}}
(-1)^{i_2+i_3+j_1+j_3+k_1+k_2}
\\&&\times
\left( {\frac{\displaystyle n+x+y+z}4} \atop 
{r_1{-}i_1{-}j_1{-}k_1,\ 
 r_2{-}i_2{-}j_2{-}k_2,\ 
 r_3{-}i_3{-}j_3{-}k_3,\ \cdot}\right)
\\&&\times
\left( {\frac{n{+}x{-}y{-}z}4} \atop {i_1,i_2,i_3,\cdot}\right)
\left( {\frac{n{-}x{+}y{-}z}4} \atop {j_1,j_2,j_3,\cdot}\right)
\left( {\frac{n{-}x{-}y{+}z}4} \atop {k_1,k_2,k_3,\cdot}\right)
\end{eqnarray*}
where
$$
\left( {\delta} \atop 
  {\alpha,\beta,\gamma,\cdot}\right)
=\left( {\delta} \atop 
  {\alpha,\beta,\gamma,\delta-\alpha-\beta-\gamma}\right)
=\frac{\delta(\delta-1)\ldots
  (\delta-\alpha-\beta-\gamma+1)}{\alpha!\ \beta!\ \gamma!}.
$$
\end{corollary}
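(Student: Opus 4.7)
The plan is to extract the coefficient of $X^{r_1}Y^{r_2}Z^{r_3}$ in the generating function $f(X,Y,Z)$ of Theorem~\ref{th:gen} by expanding each of its four factors as a multinomial series and multiplying the results. The key identity to be invoked is the generalized multinomial expansion
$$
(1+u_1+u_2+u_3)^{\delta}
=\sum_{\alpha,\beta,\gamma\geq 0}
\left({\delta}\atop{\alpha,\beta,\gamma,\cdot}\right)
u_1^{\alpha}u_2^{\beta}u_3^{\gamma},
$$
treated as a formal power series in $u_1,u_2,u_3$ with $\delta$ a formal parameter. This follows from the generalized binomial series $(1+u)^{\delta}=\sum_k \left({\delta}\atop{k}\right)u^k$ applied with $u=u_1+u_2+u_3$, combined with the classical multinomial expansion of $(u_1+u_2+u_3)^k$; the expression $\left({\delta}\atop{\alpha,\beta,\gamma,\cdot}\right)$ then appears exactly as the polynomial in $\delta$ defined in the statement.

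Next, I would apply this identity to each of the four factors of $f$ with the following indexing: the factor $(1+X+Y+Z)^{(n+x+y+z)/4}$ is expanded without any sign, its exponent triple eventually turning out to be $(r_1-i_1-j_1-k_1,\,r_2-i_2-j_2-k_2,\,r_3-i_3-j_3-k_3)$; the factor $(1+X-Y-Z)^{(n+x-y-z)/4}$ is expanded with indices $(i_1,i_2,i_3)$ and contributes the sign $(-1)^{i_2+i_3}$; the factor $(1-X+Y-Z)^{(n-x+y-z)/4}$ is expanded with indices $(j_1,j_2,j_3)$ and contributes $(-1)^{j_1+j_3}$; and the factor $(1-X-Y+Z)^{(n-x-y+z)/4}$ is expanded with indices $(k_1,k_2,k_3)$ and contributes $(-1)^{k_1+k_2}$. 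Multiplying the four expansions and demanding that the total monomial be $X^{r_1}Y^{r_2}Z^{r_3}$ forces the first triple to be precisely the one above, and requiring its entries to be nonnegative yields the constraints $i_1+j_1+k_1\leq r_1$, $i_2+j_2+k_2\leq r_2$, $i_3+j_3+k_3\leq r_3$. Collecting the four signs gives the single factor $(-1)^{i_2+i_3+j_1+j_3+k_1+k_2}$, and the four multinomial coefficients are precisely those appearing in the stated summand. Combining this computation with Theorem~\ref{th:gen} identifies the resulting sum with $P^{r_1,r_2,r_3}(x,y,z)$, as required.

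I expect the only subtlety to be of a formal nature: the quantities $(n\pm x\pm y\pm z)/4$ must be treated as formal parameters in $\delta$, with $\left({\delta}\atop{\alpha,\beta,\gamma,\cdot}\right)=\delta(\delta-1)\cdots(\delta-\alpha-\beta-\gamma+1)/(\alpha!\,\beta!\,\gamma!)$ read as a polynomial in $\delta$ rather than as a combinatorial binomial. Since the total degree of every contributing monomial is bounded by $r_1+r_2+r_3\leq n$, only finitely many terms contribute for fixed $(r_1,r_2,r_3)$, so there are no convergence issues and the final identity is a genuine identity of polynomials in $x,y,z$. The main obstacle is therefore not conceptual but purely the bookkeeping of the nine summation indices and the signs they produce.
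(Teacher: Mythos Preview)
Your proposal is correct and follows exactly the approach implicit in the paper: the Corollary is stated without proof immediately after Theorem~\ref{th:gen}, as a direct consequence of extracting the coefficient of $X^{r_1}Y^{r_2}Z^{r_3}$ from the product form of $f(X,Y,Z)$ via the multinomial expansion of each factor. Your bookkeeping of the nine indices, the signs, and the constraints is accurate.
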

\begin{remark}
In the partial case $r_2=r_3=0$, we have
$$
P^{r,0,0}(x,y,z)=K_r(K_1^{-1}(x)),\quad 
\mbox{where }
K_r(x)=\sum_i(-1)^i\left(x \atop i\right)
             \left({n-x}\atop {r-i}\right)
$$
is the well-known Krawtchouk polynomial, 
see e.g. \cite[\S5.2]{MWS}.
So, the polynomial 
$$K^{r_1,r_2,r_3}(x,y,z) = P^{r_1,r_2,r_3}(K_1(x),K_1(y),K_1(z))$$
can be seen as a generalization of the Krawtchouk polynomial.
\end{remark}
\begin{corollary}\label{c:enum}
Given an equitable partition $C$ 
of the $n$-cube with quotient matrix $S$,
the enumerator 
$\sum_{r_1,r_2,r_3} T^{r_1,r_2,r_3}X^{r_1}Y^{r_2}Z^{r_3}$
is equal to
\begin{eqnarray}\label{eq:enum}
T^{0,0,0}
(1+X+Y+Z)^{\frac{n+S'+S''+S'''}4} 
(1+X-Y-Z)^{\frac{n+S'-S''-S'''}4}\phantom{.}
 \\ \nonumber {}\times
(1-X+Y-Z)^{\frac{n-S'+S''-S'''}4} 
(1-X-Y+Z)^{\frac{n-S'-S''+S'''}4}.
\end{eqnarray}
\end{corollary}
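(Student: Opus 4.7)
The plan is to read this corollary as a purely formal consequence of Lemma~\ref{l:poly} and Theorem~\ref{th:gen}: substitute the matrices $S'$, $S''$, $S'''$ for the scalar variables $x$, $y$, $z$ in the generating-function identity of Theorem~\ref{th:gen}, multiply by $T^{0,0,0}$, and observe that the left-hand side becomes exactly the asserted enumerator.

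In more detail, I would first apply Lemma~\ref{l:poly} term by term to obtain
\begin{equation*}
\sum_{r_1,r_2,r_3\geq 0} T^{r_1,r_2,r_3}X^{r_1}Y^{r_2}Z^{r_3}
= T^{0,0,0}\sum_{r_1,r_2,r_3\geq 0} P^{r_1,r_2,r_3}(S',S'',S''')X^{r_1}Y^{r_2}Z^{r_3}.
\end{equation*}
The expression $P^{r_1,r_2,r_3}(S',S'',S''')$ is unambiguous because $S'$, $S''$, $S'''$ commute pairwise by Lemma~\ref{l:commut}. Then I would invoke Theorem~\ref{th:gen}, whose generating function $f(X,Y,Z)$ expresses the scalar identity
\begin{equation*}
\sum_{r_1,r_2,r_3\geq 0} P^{r_1,r_2,r_3}(x,y,z)X^{r_1}Y^{r_2}Z^{r_3}=f(X,Y,Z),
\end{equation*}
and observe that the corollary is just the instance of this identity with $x\leftarrow S'$, $y\leftarrow S''$, $z\leftarrow S'''$.

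The only nontrivial point to verify is that this substitution is legitimate, since the right-hand side $f(X,Y,Z)$ contains the formally alarming factors such as $(1+X+Y+Z)^{(n+x+y+z)/4}$ whose exponent depends on $x,y,z$. The clean way to handle this is to interpret each factor $(1+\varepsilon_1 X+\varepsilon_2 Y+\varepsilon_3 Z)^{\alpha}$ as the formal power series
\begin{equation*}
\sum_{a,b,c\geq 0}\binom{\alpha}{a,b,c,\cdot}(\varepsilon_1X)^{a}(\varepsilon_2Y)^{b}(\varepsilon_3Z)^{c},
\end{equation*}
whose coefficients are polynomials in $\alpha$ (degree $a+b+c$). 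Since here $\alpha$ is a linear combination of $n,x,y,z$, each coefficient is a polynomial in $x,y,z$, and so is every coefficient of the product $f(X,Y,Z)$; that coefficient at $X^{r_1}Y^{r_2}Z^{r_3}$ is precisely $P^{r_1,r_2,r_3}(x,y,z)$ by Theorem~\ref{th:gen}. Because the three commuting matrices $S'$, $S''$, $S'''$ may be substituted into any polynomial in $x,y,z$, the coefficientwise substitution is well defined, and the product of the four (matrix-valued) formal series converges coefficientwise to $\sum P^{r_1,r_2,r_3}(S',S'',S''')X^{r_1}Y^{r_2}Z^{r_3}$.

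The main obstacle is purely notational, not computational: convincing the reader that the displayed expression~(\ref{eq:enum}) denotes an honest formal power series in $X,Y,Z$ with matrix coefficients, and that it coincides term-by-term with the coefficientwise image of $f(X,Y,Z)$ under $x\mapsto S'$, $y\mapsto S''$, $z\mapsto S'''$. Once this interpretation is fixed, no further computation is required and the corollary follows by one line combining Lemma~\ref{l:poly} with Theorem~\ref{th:gen}.
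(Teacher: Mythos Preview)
Your proposal is correct and is exactly the intended argument: the paper states this result as a corollary with no proof, precisely because it is the one-line combination of Lemma~\ref{l:poly} and Theorem~\ref{th:gen} that you describe, with the formal-power-series reading of the matrix exponentials. The only subtlety you leave implicit---that $T^{0,0,0}P^{r_1,r_2,r_3}(S',S'',S''')$ vanishes for $r_1+r_2+r_3>n$ even though $P^{r_1,r_2,r_3}(S',S'',S''')$ itself need not---is exactly what the paper addresses in Remark~\ref{rem:alg}.
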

\begin{remark}\label{rem:alg}
The generating function $f$ in Theorem~\ref{th:gen}
is a polynomial (of degree $n$) 
if and only if all four powers are nonnegative integers.
But the quotient matrix $S$, in general, 
can have eigenvalues that make the powers negative or non-integer 
being substituted for $x$, $y$, $z$. 
This means that $P^{r_1,r_2,r_3}(S',S'',S''')$ is not necessarily
the zero matrix when $r_1+r_2+r_3 >n$. 
Nevertheless, $T^{0,0,0}P^{r_1,r_2,r_3}(S',S'',S''')$ 
will be zero in this case, 
and the enumerator in Corollary~\ref{c:enum} 
is a degree-$n$ polynomial.
This can be explained by the fact 
that the dimension of the matrix 
algebra $\mathcal{S}$ generated by $S'$, $S''$, and $S'''$ 
is higher than the dimension of its restriction 
by the action on the vector space $\mathcal T$ 
generated by $T^{r_1,r_2,r_3}$, $r_1,r_2,r_3\geq 0$.
For example, for the singleton partition,
$S$ generates an algebra with a basis $(A^{(w)})_{w=0}^n$,
such that $A^{(w)}$ and $A^{(w')}$ have no common non-zero entries provided $w\ne w'$
(this algebra is known as the Bose--Mesner algebra, 
and the matrices $A^{(w)}$ were already mentioned in Section~\ref{s:pre}).
It follows that $(A^{(w')}\otimes A^{(w'')}\otimes A^{(w''')})_{w',w'',w'''=0}^n$
is a basis of $\mathcal{S}$ as a vector space.
Hence, $\mathcal{S}$ has dimension $(n+1)^3$.
On the other hand, for any element $S_0$ of $\mathcal{S}$,
the vector $T^{0,0,0} S_0$ is a linear combination of 
$T^{0,0,0} P^{l_1,l_2,l_3}(S',S'',S''')$ with $l_1+l_2+l_3\leq n$.
Because of the commutativity, the same linear relation will be valid
if we replace $T^{0,0,0}$ by any of $T^{r_1,r_2,r_3}$ 
or by a linear combination of them.
This means that the action of any $S_0$ from $\mathcal{S}$ on $\mathcal T$ is
a linear combination of the actions of $P^{l_1,l_2,l_3}(S',S'',S''')$ with $l_1+l_2+l_3\leq n$.
That is, the space of such linear transformations of $\mathcal T$ has the dimension 
$\left( {n+3} \atop 3 \right) = \frac{(n+3)(n+2)(n+1)}6 $, 
which is smaller than $(n+1)^3$.
\end{remark}
 In the rest of this section, 
 we list the polynomials $P^{r_1,r_2,r_3}$ 
 of degree at most~$4$.
\\
$P^{0,0,0} = 1 ,$ \\ 
$P^{0,0,1} = z ,$ \\ 
$P^{0,0,2} = [z^2 - n] \,/\, 2 ,$ \\ 
$P^{0,1,1} = y z-x ,$ \\
$P^{0,0,3} = [z^3 + (2-3n) z] \,/\, 6 ,$ \\ 
$P^{0,1,2} = [y z^2 - 2 x z + (2-n) y] \,/\, 2 ,$ \\
$P^{1,1,1} = x y z - x^2 - y^2 - z^2 + 2 n,$ \\ 
$P^{0,0,4} = [z^4 + (8-6n) z^2 + (3n^2-6n)] \,/\, 24 ,$ \\
$P^{0,1,3} = [y z^3 - 3 x z^2 + (8-3n) y z + (3n-6) x] \,/\, 6 ,$ \\
$P^{0,2,2} = [y^2 z^2 - 4 x y z + 2 x^2 + (4-n) y^2 + (4-n) z^2 + (n^2-6n) ] \,/\, 4 ,$ \\
$P^{1,1,2} = [x y z^2 - 2 x^2 z - 2 y^2 z - z^3 + (6-n) x y + (5n-6) z] \,/\, 2.$ 
\begin{verbatim}
n:=Indeterminate(Rationals,1);; SetName(n,"n");
x:=Indeterminate(Rationals,2);; SetName(x,"x");
y:=Indeterminate(Rationals,3);; SetName(y,"y");
z:=Indeterminate(Rationals,4);; SetName(z,"z");
I:=Indeterminate(Rationals,5);; SetName(I,"I"); # for Id. matrix
P:=function(a,b,c) # calculates $P^{a,b,c}(x,y,z)$
 if (a<0)or(b<0)or(c<0) then return 0*I; 
 elif (a+b+c=0) then return I; 
 elif (a>0) then return ( x*P(a-1,b,c)-(n-a-b-c+2)*I*I*P(a-2,b,c)
                                      -(b+1)*I*P(a-1,b+1,c-1)
                                      -(c+1)*I*P(a-1,b-1,c+1) )/a;
 else return Value(P(b,c,a),[x,y,z],[y,z,x]); # using symmetry
 fi;
end;
Print( Value(P(1,2,3),[n,I],[6,1]), "\n" ); # example
S:=[[0,5],[3,2]];; N:=Sum(S[1]);
SI :=KroneckerProduct( S,S^0);; IIS:=KroneckerProduct(SI^0,S);;
ISI:=KroneckerProduct(S^0,SI);; SII:=KroneckerProduct(SI,S^0);;
W0:=[1,0,0,0,0,0,0,1];;
Print(W0*Value(P(1,2,3),[n,x,y,z,I],
      [5,TransposedMat(SII),ISI,IIS,SII^0]),"\n");
# Negative values mean that an equitable partition 
# with quotient matrix S does not exist
\end{verbatim}

\section{Connection with the correlation-immunity bound}\label{s:ci}
The theory of the equitable $2$-partitions 
of $n$-cubes was developed,
in its current state, by D.\,Fon-Der-Flaass in 
\cite{FDF:PerfCol,FDF:CorrImmBound,FDF:12cube.en}.
At the moment, there are three known general necessary conditions 
(\ref{eq:nabcd}), (\ref{eq:2n}), (\ref{eq:cib}) 
for a matrix
$\left({a \atop c} {b \atop d}\right)$ 
to be the quotient matrix of some equitable partition
(and only for one matrix $S=\left({1 \atop 11} {9 \atop 3}\right)$ 
satisfying these three conditions 
it is known that $S$ is not the quotient matrix 
of an equitable partition \cite{FDF:12cube.en}).
Two conditions are rather simple:
\begin{eqnarray}
&&n =a+b=c+d;  \label{eq:nabcd}\\
&&(b+c)/\mathrm{gcd}(b,c) \mbox{ divides } 2^n. \label{eq:2n}
\end{eqnarray}
Condition (\ref{eq:nabcd}) holds 
because the degree of every vertex is $n$; 
(\ref{eq:2n}) holds because $b|C_1|=c|C_2|$ 
and $|C_1|$, $|C_2|$ must be integer. 
The third condition has a nontrivial proof 
and is known as the correlation immunity bound,
because of its connection 
with the corresponding bound for Boolean functions \cite{FDF:CorrImmBound}.
\begin{theorem}[\cite{FDF:CorrImmBound}]\label{th:ci}
Assume that there exists an equitable partition 
of an $n$-cube with quotient matrix
$\left({a \atop c} {b \atop d}\right)$ where $b\neq c$; then 
\begin{equation}\label{eq:cib} 
c-a \leq n/3.
\end{equation}
\end{theorem}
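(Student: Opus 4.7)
The plan is to read the bound directly off the exponents of the generating function in Corollary~\ref{c:enum}, using the nonnegativity of the triangle distribution. Set $\lambda := a-c = d-b$, the non-principal eigenvalue of $S$; then $c-a\leq n/3$ is equivalent to the inequality $n+3\lambda\geq 0$, which is precisely the condition that the first exponent $\tfrac{n+x+y+z}{4}$ in $f(X,Y,Z)$ be nonnegative at $x=y=z=\lambda$. So the goal is to extract the sign of that one exponent from the combinatorics of the triangle distribution.

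First I would set up the spectral picture. The matrix $S$ has left eigenvectors $u_+=(c,b)$ (eigenvalue $n$) and $u_-=(1,-1)$ (eigenvalue $\lambda$), and their tensor cubes $u_{\epsilon_1}\otimes u_{\epsilon_2}\otimes u_{\epsilon_3}$ with $\epsilon_i\in\{+,-\}$ simultaneously diagonalize $S'$, $S''$, $S'''$. Decomposing $T^{0,0,0}$ (whose only nonzero entries are $T^{0,0,0}_{iii}=|C_i|$) in this basis, the component along $u_-^{\otimes 3}$ is a nonzero multiple of $|C_1|-|C_2|$, which does not vanish precisely because $b\neq c$ (using $b|C_1|=c|C_2|$).

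Applying Corollary~\ref{c:enum} component-wise, on the $(-,-,-)$-eigencomponent the enumerator reduces to the scalar series
\begin{eqnarray*}
g(X,Y,Z) &=& (1+X+Y+Z)^{(n+3\lambda)/4}(1+X-Y-Z)^{(n-\lambda)/4}\\
&&{}\times(1-X+Y-Z)^{(n-\lambda)/4}(1-X-Y+Z)^{(n-\lambda)/4},
\end{eqnarray*}
while on each of the other seven eigencomponents the first exponent is either $\tfrac{3n\pm\lambda}{4}$ or $n$, all manifestly nonnegative since $|\lambda|\leq n$. By Remark~\ref{rem:alg} the total enumerator truncates to a polynomial of degree $n$ whose coefficients are the nonnegative entries $T^{r_1,r_2,r_3}_{ijk}$.

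The hard part is disentangling the eight eigencomponents so as to isolate a nonnegative combination of triangle entries whose generating function involves only $g$. I would attempt to sum the four eigencomponents with an even number of minus signs, whose first exponents share a common pattern, and then express the resulting combination back in the standard basis of the triple-indexed arrays. If $n+3\lambda<0$, then $(1+X+Y+Z)^{(n+3\lambda)/4}$ expands as a formal power series with coefficients of alternating sign, and a judicious choice of indices $(r_1,r_2,r_3)$ and $(i,j,k)$ should expose a strictly negative value of $T^{r_1,r_2,r_3}_{ijk}$, giving the desired contradiction. Finding that precise choice is the heart of the matter, and mirrors the Krawtchouk-polynomial identity at the core of Fon-Der-Flaass's original argument; I expect this cancellation step, rather than the setup, to be the real combinatorial obstacle.
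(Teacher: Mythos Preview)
The paper does not contain a proof of Theorem~\ref{th:ci}: the result is quoted from Fon-Der-Flaass~\cite{FDF:CorrImmBound}, and the paper's own contribution on this point is the purely computational Proposition~\ref{p:100}, which checks for $n\le 100$ that whenever the hypotheses hold and~(\ref{eq:cib}) fails, some formally computed entry $T^{0,r_2,r_3}_{111}$ is negative. In Section~\ref{s:concl} the author explicitly lists ``Explain Proposition~\ref{p:100} theoretically; prove it for an arbitrary~$n$'' as an open problem. What you are proposing is therefore not a reconstruction of anything proved in the paper, but an attack on that open problem.

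As a proof, your proposal has a real gap, and you identify it yourself. Projecting $T^{0,0,0}$ onto the $u_-^{\otimes 3}$-eigenline does yield the scalar enumerator $g(X,Y,Z)$ with leading exponent $(n+3\lambda)/4$, but the linear functional picking out that component is an \emph{alternating} sum in the indices $i,j,k$, not a nonnegative one; a negative Taylor coefficient of $g$ therefore does not directly contradict $T^{r_1,r_2,r_3}_{ijk}\ge 0$. Your suggested fix of summing four eigencomponents does not isolate the $(n+3\lambda)/4$ exponent either (the components with an even number of minus signs carry leading exponents $n$ and $(n+\lambda)/2$, none of which is the critical one; and your listing of the seven ``safe'' exponents as $(3n\pm\lambda)/4$ or $n$ omits the $(n+\lambda)/2$ cases). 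Translating a negative exponent on a single eigencomponent into a concrete negative entry $T^{r_1,r_2,r_3}_{ijk}$ for specific $(r_1,r_2,r_3,i,j,k)$ is exactly the step the paper leaves open, and your outline does not supply it.
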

Without loss of generality, we may assume
$b\geq c$ 
(otherwise we can satisfy this condition 
by renumbering the partition elements). 
The following computational result connects 
the bound (\ref{eq:cib}) with the evident
condition that the elements of the interweight distribution 
of a partition must be nonnegative.
\begin{proposition}\label{p:100}
For every integer $n$ from $1$ to $100$ 
and every integer $a$, $c>0$, $b>c$, $d$ 
satisfying {\rm (\ref{eq:nabcd})}, {\rm (\ref{eq:2n})} 
and missing {\rm (\ref{eq:cib})}, 
there are $r_2$, $r_3$ such that $T^{0,r_2,r_3}_{111}<0$, where 
$T^{\ldots}_{\ldots}$ are formally calculated using 
{\rm (\ref{eq:T3})--(\ref{eq:T1})}.
\end{proposition}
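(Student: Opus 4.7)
The plan is to verify the statement by a direct finite computation, since both the range of $n$ and the set of admissible parameter quadruples are explicitly bounded.

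First, I would enumerate all candidate quintuples $(n,a,b,c,d)$ with $1\le n\le 100$, $b>c>0$, $a+b=c+d=n$, $(b+c)/\gcd(b,c)$ dividing $2^n$, and $c-a>n/3$. For each fixed $n$ only $O(n^2)$ pairs $(a,c)$ need be tried, and the divisibility and bound tests are trivial, so the whole list of candidates is easily produced.

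Next, for each surviving candidate I would compute the array $T^{r_1,r_2,r_3}_{ijk}$ formally from the initial data $T^{0,0,0}_{111}=|C_1|$, $T^{0,0,0}_{222}=|C_2|$ (with all other entries zero), where the cardinalities are determined by $b|C_1|=c|C_2|$ and $|C_1|+|C_2|=2^n$. Each of (\ref{eq:T3})--(\ref{eq:T1}) contains exactly one summand (the underlined one) whose index sum $r_1+r_2+r_3$ exceeds that of the left-hand side, while all other summands have strictly smaller index sum; rearranging gives explicit expressions for $T^{r_1,r_2,r_3+1}$, $T^{r_1,r_2+1,r_3}$, and $T^{r_1+1,r_2,r_3}$ in terms of previously computed entries. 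One can therefore populate the array for $r_1+r_2+r_3\le n$ layer by layer. Equivalently, one can evaluate the enumerator of Corollary~\ref{c:enum} directly at the given $2\times 2$ matrix $S$, as the GAP snippet of the preceding section suggests.

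Finally, for each candidate I would scan $T^{0,r_2,r_3}_{111}$ over $r_2,r_3\ge 0$ with $r_2+r_3\le n$ and output the first pair producing a negative value. The claim is that such a pair always exists in the specified range, which is then confirmed by executing the procedure for all admissible quintuples with $n\le 100$. The main obstacle is not mathematical but organizational: one must ensure that the three recursions (\ref{eq:T3})--(\ref{eq:T1}) are applied consistently \emph{without} presupposing that an equitable partition with the given parameters actually exists (otherwise the argument would be circular). The required compatibility is precisely what Lemma~\ref{l:commut} and the existence/uniqueness of the polynomials $P^{r_1,r_2,r_3}$ in Lemma~\ref{l:poly} guarantee, so the proposition reduces to an extensive but routine machine verification.
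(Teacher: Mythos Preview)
Your proposal is correct and matches the paper's approach: the proposition is presented there as a computational result with no written proof, the intended verification being exactly the finite enumeration and recursive evaluation you describe (the GAP fragment at the end of Section~\ref{s:poly} is the relevant template). Your remark about consistency of the three recursions when no partition exists is apt; the cleanest way to phrase it is that one defines $T^{r_1,r_2,r_3}$ as $T^{0,0,0}P^{r_1,r_2,r_3}(S',S'',S''')$, where the $P^{r_1,r_2,r_3}$ are the Taylor coefficients of the generating function in Theorem~\ref{th:gen}, and then all three recursions hold automatically as polynomial identities.
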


\section{The real-valued case}\label{s:real}
In this section, we will briefly discuss 
a generalization of the equitable partitions, 
where the spectrum of a single vertex 
can possess an arbitrary vector value 
over the real numbers $R$.
Let $C: V(H^n) \to R^m$ be a vector function 
whose values are $m$-tuples over $R$.
By the \emph{spectrum} $C(M)$ 
of a set $M \subseteq V(H^n)$ we will mean 
the sum of values of $C$ over $M$. 
The function $C$ is a \emph{perfect structure}
with \emph{parameter $m\times m$ matrix} $S$
if for every vertex $x$ 
the spectrum of the neighborhood of $x$ equals $C(x) S$.
The concept of perfect structures 
generalizes the equitable partitions,
whose characteristic vector functions are perfect structures.
As an example, to show that this generalization can give 
something interesting, we refer to \cite{Kro:2m-4}, 
where it is shown
that the optimal binary $1$-error-correcting codes 
of length $2^m-4$ 
are related with perfect structures, 
but not with equitable partitions in general.
Also note that the eigenfunctions of a graph are the perfect structures with $m=1$.
For this important partial case, our theory makes sense as well.

For the perfect structures, 
we can define the interweight distribution as follows.
For two (similarly, for three) $m$-tuples 
$a=(a_1,\ldots,a_m)$, $b=(b_1,\ldots,b_m)$, 
we define their tensor product $a \otimes b$ as 
$$
(c_{jk})_{j,k=1}^m
 =(c_{11},...,c_{1m},c_{21},\ldots c_{mm}), 
  \quad\mbox{where } c_{jk}=a_jb_k .
$$
For a vertex $v$, let $W_v^{r_1,r_2,r_3}$ 
denote the sum of $C(x)\otimes C(y)$ over
all pairs of vertices $(x,y)$ such that 
$d(x,y)=r_2+r_3$, $d(v,y)=r_1+r_3$, $d(v,x)=r_1+r_2$.
The value  $W^{r_1,r_2,r_3}_{v,jk}$ 
can still be treated as ``the number of the triangles
$(v,x\in C_j,y\in C_k)$ such that 
$d(x,y)=r_2+r_3$, $d(v,y)=r_1+r_3$, $d(v,x)=r_1+r_2$'' 
if the values of $C$ are treated as the multiplicities 
of a collection $(C_1,\ldots,C_m)$ of multisets. 
Similarly, let $T^{r_1,r_2,r_3}$ denote the sum of 
$C(v)\otimes C(x)\otimes C(y)$ over
all triples of vertices $(v,x,y)$ such that 
$d(x,y)=r_2+r_3$, $d(v,y)=r_1+r_3$, $d(v,x)=r_1+r_2$.
In the case of perfect structures, 
the equations similar to (\ref{eq:W3}) and (\ref{eq:W2}) hold for 
$W_v^{r_1,r_2,r_3}$, and $T^{r_1,r_2,r_3}$ satisfy
equations (\ref{eq:T3})--(\ref{eq:T1})

\begin{corollary}
Given a perfect structure $C$ and a vertex $v$, 
all the values $W_v^{r_1,r_2,r_3}$,
$r_1+r_2+r_3 \leq n$ can be calculated from 
$W^{r_1,0,0}$, $r_1 = 0,1,\ldots,n$ 
and the parameter matrix $S$.
\end{corollary}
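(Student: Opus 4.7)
The plan is to induct on the total index $s := r_1+r_2+r_3$, using the analogs of (\ref{eq:W3}) and (\ref{eq:W2}) (which the paragraph preceding the corollary asserts are available for perfect structures) to ascend in the $r_2$- and $r_3$-directions, with the initial data $W_v^{r_1,0,0}$ filling in the $r_1$-axis on which no recursion is supplied.

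For the base case $s=0$, the identity $W_v^{0,0,0}=C(v)\otimes C(v)$ is immediate from the definition and coincides with one of the supplied inputs. For the inductive step, assuming that every $W_v^{r_1,r_2,r_3}$ with $r_1+r_2+r_3\leq s$ has already been determined, I would fix a target triple $(r_1',r_2',r_3')$ with $r_1'+r_2'+r_3'=s+1$ and split into three cases. If $r_3'\geq 1$, applying the analog of (\ref{eq:W3}) at $(r_1',r_2',r_3'-1)$ makes the underlined summand equal to $r_3'\,W_v^{r_1',r_2',r_3'}$; dividing by $r_3'\geq 1$ yields an explicit expression for the target in terms of $W_v^{r_1',r_2',r_3'-1}\,S'''$ together with the three other $W_v^{\ldots}$-vectors on the right-hand side, each of index sum at most~$s$. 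If $r_3'=0$ but $r_2'\geq 1$, the symmetric argument using the analog of (\ref{eq:W2}) at $(r_1',r_2'-1,0)$ applies. If $r_2'=r_3'=0$, the value $W_v^{s+1,0,0}$ is itself one of the initial data. In every case, the only matrix multiplications involved are by $S''$ or $S'''$, which are constructed from $S$ alone (as $I\otimes S\otimes I$ and $I\otimes I\otimes S$), so the computation really does require only $S$ and the $n+1$ vectors $W_v^{r_1,0,0}$.

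The main --- essentially only --- conceptual point is to see why the $r_1$-axis of initial values must be supplied rather than derived. In the equitable partition setting, the third recursion (\ref{eq:W1}) was obtained in Section~\ref{s:proof} via the identity $D'S'D'^{-1}=S'^{\mathrm{T}}$, equivalently $|C_i|S_{i,j}=|C_j|S_{j,i}$, a property of equitable partitions with no counterpart for a general perfect structure; accordingly, no analog of (\ref{eq:W1}) is asserted. Thus the recursion can climb only in the $r_2$- and $r_3$-directions, and the case split above confirms that the $r_1$-axis seed is both necessary and sufficient to reach every triple with $r_1+r_2+r_3\leq n$.
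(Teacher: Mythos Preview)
Your proof is correct and follows the paper's (implicit) argument: the corollary is stated without proof, being an immediate consequence of the analogs of (\ref{eq:W3})--(\ref{eq:W2}) asserted in the preceding sentence together with the inductive scheme already sketched in the Remark at the end of Section~\ref{s:proof}. One tiny notational point: in the perfect-structure setting $W_v^{r_1,r_2,r_3}\in R^{m^2}$ is indexed by $(j,k)$ only, so the relevant matrices are the $m^2\times m^2$ versions $S\otimes I$ and $I\otimes S$ rather than the $m^3\times m^3$ ones you cite --- but since the paper itself only says the equations are ``similar'', this does not affect your argument.
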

But in general, in contrast to the case of equitable partitions,
the values $W^{r_1,0,0}$ 
cannot be calculated from $C(v)$.
\begin{corollary}
Given a perfect structure $C$, 
all the values $T^{r_1,r_2,r_3}$,
$r_1+r_2+r_3 \leq n$ can be calculated from $T^{0,0,0}$ 
and the parameter matrix $S$.
\end{corollary}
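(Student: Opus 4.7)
The plan is to run the same recursive induction on $s=r_1+r_2+r_3$ that underlies the existence part of Lemma~\ref{l:poly}, now using the hypothesis (stated in the paragraph preceding the corollary) that equations (\ref{eq:T3})--(\ref{eq:T1}) continue to hold verbatim for an arbitrary perfect structure.

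The base case $s=0$ is immediate: $T^{0,0,0}$ is part of the input. For the inductive step, I would suppose that every $T^{r_1',r_2',r_3'}$ with $r_1'+r_2'+r_3'\leq s$ has already been written as an explicit expression in $T^{0,0,0}$ and $S$. Given a target triple $(r_1,r_2,r_3)$ with $r_1+r_2+r_3=s+1\leq n$, at least one of the coordinates is positive; if $r_3>0$, I apply (\ref{eq:T3}) at the shifted indices $(r_1,r_2,r_3-1)$. The underlined summand then becomes $r_3\,T^{r_1,r_2,r_3}$, which is the only term of total index $s+1$ appearing in the equation: the remaining three $T$-summands on the right have total indices $s$, $s$, and $s-1$, while the left-hand side is $T^{r_1,r_2,r_3-1}S'''$, with $T^{r_1,r_2,r_3-1}$ of total index $s$ and $S'''=I\otimes I\otimes S$ constructed directly from $S$. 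Dividing by $r_3$ expresses $T^{r_1,r_2,r_3}$ in the desired form. The cases $r_2>0$ and $r_1>0$ are handled symmetrically via (\ref{eq:T2}) and (\ref{eq:T1}).

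I do not anticipate a real obstacle: the three recursions are already granted in this generality, and the induction is purely mechanical. The reason the argument goes through for $T$ --- in contrast with the preceding corollary about $W_v^{r_1,r_2,r_3}$, which additionally requires the full vector $W^{r_1,0,0}$ as input --- is that the recursion for $T$ involves no datum attached to an individual vertex: the initial value $T^{0,0,0}$ is a single global aggregate, and $S$ enters only through the universal operators $S',S'',S'''$.
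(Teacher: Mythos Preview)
Your proposal is correct and is exactly the argument the paper has in mind: the corollary is stated without a separate proof, as an immediate consequence of the preceding sentence that equations (\ref{eq:T3})--(\ref{eq:T1}) hold for perfect structures, and the existence part of Lemma~\ref{l:poly} already records that these three recursions let one compute every $T^{r_1,r_2,r_3}$ from $T^{0,0,0}$ by induction on $r_1+r_2+r_3$. Your write-up simply makes that induction explicit.
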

Again, $T^{0,0,0}$ is not invariant 
over all perfect structures with the same parameter matrix.

\begin{example} 
Let $n=m=2$, $C(00)=C(01)=(2,0)$, 
$C(10)=C(11)=(0,2)$,
$C'(00)=(2,0)$, $C'(01)=C'(10)=(1,1)$, $C'(11)=(0,2)$. 
Then $C$ and $C'$ are perfect structures 
with the same parameter matrix
$\left({1 \atop 1} {1 \atop 1} \right)$. 
Also, $C(v)=C'(v)$ for $v=00$.
Nevertheless, it is easy to see 
that the values $W_v^{1,0,0}$ and $T^{0,0,0}$ 
differ for $C$ and $C'$.
\end{example}
So, we can see that the strong distance invariance, 
in contrast to the distance invariance,
cannot be generalized to the perfect structures, 
while the recursive relations (\ref{eq:W3})--(\ref{eq:W2}), 
(\ref{eq:T3})--(\ref{eq:T1}) 
are still valid.
\smallskip
\section{Open problems}\label{s:concl}

For a partition of the vertex set of the $n$-cube, 
the distance invariance and the strong distance invariance are equivalent.
Indeed, the equitability of the partition follows, by definition, 
from each of these properties. 
In its turn, the equitability implies the distance invariance 
and the strong distance invariance.
The things are not so easy 
if we consider collections $C$ 
of subsets that are not partitions.
In particular, if $C$ consists of only one set.

\begin{problem}
Does there exist a distance invariant set of vertices of an $n$-cube that is not strongly distance invariant?
\end{problem}

Clearly, any set that is a cell of some equitable partition is out of consideration 
because it is strongly distance invariant.
To formulate the next question, 
recall that the \emph{distance distribution} 
of a set is the multiset of mutual distances between the set elements.

\begin{problem}
Do there exist two distance invariant sets of vertices of an $n$-cube
with the same distance distributions 
but different triangle distributions?
\end{problem}

The question can be treated as follows: is the triangle distribution 
the function of the distance distribution,
for the distance invariant sets in $n$-cubes?
The following example in the $4$-cube shows that it is not the case for arbitrary sets:
$\{0000$, $0001$, $0010$, $1111\}$, 
$\{0000$, $0001$, $0111$, $1111\}$.
The distances in both cases are $1$, $1$, $2$, $3$, $3$, $4$, 
and the triangles, in terms of $r_1$, $r_2$, $r_3$, are 
$(0,1,2)$, $(0,1,2)$, $(0,1,3)$, $(0,1,3)$ and
$(0,1,3)$, $(0,1,3)$, $(1,1,1)$, $(1,1,2)$, respectively.

Another question arises from 
the computational results in Section~\ref{s:ci}.
\begin{problem}
Explain Proposition~\ref{p:100} theoretically; 
prove it for an arbitrary $n$.
\end{problem}
In the case on $m\geq 3$, 
the interweight distributions 
can be used to obtain new nonexistence results.
\begin{example}\label{ex:22}
Consider the matrix
$S=\left(\begin{array}{ccc}
          0& 22& 0 \\ 5& 6& 11 \\ 0& 10& 12
         \end{array}
\right)$. The eigenvalues $-10$, $6$, $22$ of the matrix are eigenvalues of the $22$-cube too.
Calculating the weight distributions of a hypothetical equitable partition
with the quotient matrix $S$, we do not find a contradiction as all the elements found
are nonnegative integers.
However, calculating the triangle distribution gives a negative value for
$T^{0,8,9}_{111}$, for example. 
Hence, equitable partitions of the $22$-cube with the quotient matrix $S$ do not exist.
\end{example}

Another theoretical question is to find connections 
between the interweight distributions 
and the Terwilliger algebra of the $n$-cube \cite{Go:2002:Terw}. 
One of the relations was occasionally found by the author of the current paper
during a search in his local database of papers: 
it is the subword ``terw'', which occurs in both notations.
There should be deeper connections, 
as the Terwilliger algebra 
is related to distance triangles and its dimension 
coincides with
$\left( {n+3} \atop 3 \right)$, 
 the dimension of the algebra 
of linear transformations of $\mathcal T$
considered in Section~\ref{s:poly} (Remark~\ref{rem:alg}).
But the last algebra is commutative, while the Terwilliger algebra is not.

\begin{acknowledgements}
The author thanks Anastasiya Vasil'eva 
for useful discussions and
the anonymous referees for the work in
reviewing this manuscript.
\end{acknowledgements}

\begin{thebibliography}{10}
\providecommand{\url}[1]{{#1}}
\providecommand{\urlprefix}{URL }
\expandafter\ifx\csname urlstyle\endcsname\relax
  \providecommand{\doi}[1]{DOI~\discretionary{}{}{}#1}\else
  \providecommand{\doi}{DOI~\discretionary{}{}{}\begingroup
  \urlstyle{rm}\Url}\fi

\bibitem{Brouwer}
Brouwer, A.E., Cohen, A.M., Neumaier, A.: Distance-Regular Graphs.
\newblock Springer-Verlag, Berlin (1989)

\bibitem{CalKan:86:2weight}
Calderbank, R., Kantor, W.M.: The geometry of two-weight codes.
\newblock \href{http://blms.oxfordjournals.org}{Bull. Lond. Math. Soc.}
  \textbf{18}(2), 97--122 (1986).
\newblock \doi{10.1112/blms/18.2.97}

\bibitem{CCD:92}
Camion, P., Courteau, B., Delsarte, P.: On $r$-partition designs in {H}amming
  spaces.
\newblock \href{http://link.springer.com/journal/200}{Appl. Algebra Eng.
  Commun. Comput.} \textbf{2}(3), 147--162 (1992).
\newblock \doi{10.1007/BF01294330}

\bibitem{CDS}
Cvetkovi\'c, D.M., Doob, M., Sachs, H.: Spectra of Graphs: Theory and
  Application.
\newblock Academic Press, New York, San Francisco, London (1980)

\bibitem{DelFar:89}
Delaney, F.A., Farrell, P.G.: Calculation of error probabilities for
  distance-invariant nonlinear error control codes.
\newblock
  \href{http://ieeexplore.ieee.org/xpl/RecentIssue.jsp?punumber=2220}{Electron.
  Lett.} \textbf{25}(8), 497--498 (1989).
\newblock \doi{10.1049/el:19890340}

\bibitem{FDF:CorrImmBound}
Fon-Der-Flaass, D.G.: A bound on correlation immunity.
\newblock \href{http://semr.math.nsc.ru}{Sib. Ehlektron. Mat. Izv.} \textbf{4},
  133--135 (2007).
\newblock \urlprefix\url{http://mi.mathnet.ru/eng/semr149}

\bibitem{FDF:PerfCol}
Fon-Der-Flaass, D.G.: Perfect $2$-colorings of a hypercube.
\newblock \href{http://link.springer.com/journal/11202}{Sib. Math. J.}
  \textbf{48}(4), 740--745 (2007).
\newblock \doi{10.1007/s11202-007-0075-4}.
\newblock Translated from
  \href{http://www.mathnet.ru/php/journal.phtml?jrnid=smj\&option_lang=eng}{Sib.
  Mat. Zh.} 48(4), 923-930 (2007)

\bibitem{FDF:12cube.en}
Fon-Der-Flaass, D.G.: Perfect colorings of the $12$-cube that attain the bound
  on correlation immunity.
\newblock \href{http://semr.math.nsc.ru}{Sib. Ehlektron. Mat. Izv.} \textbf{4},
  292--295 (2007).
\newblock \urlprefix\url{http://mi.mathnet.ru/eng/semr158}.
\newblock In Russian.

\bibitem{Go:2002:Terw}
Go, J.T.: The {T}erwilliger algebra of the hypercube.
\newblock \href{http://www.sciencedirect.com/science/journal/01956698}{Eur. J.
  Comb.} \textbf{23}(4), 399--429 (2002).
\newblock \doi{10.1006/eujc.2000.0514}

\bibitem{Godsil93}
Godsil, C.D.: Algebraic Combinatorics.
\newblock Chapman and Hall, New York (1993)

\bibitem{Higman:75}
Higman, D.G.: Coherent configurations. {P}art {I}: Ordinary representation
  theory.
\newblock \href{http://link.springer.com/journal/10711}{Geometriae Dedicata}
  \textbf{4}(1), 1--32 (1975).
\newblock \doi{10.1007/BF00147398}

\bibitem{Kro:struct}
Krotov, D.S.: On weight distributions of perfect colorings and completely
  regular codes.
\newblock \href{http://link.springer.com/journal/10623}{Des. Codes
  Cryptography} \textbf{61}(3), 315--329 (2011).
\newblock \doi{10.1007/s10623-010-9479-4}

\bibitem{Kro:2m-4}
Krotov, D.S.: On the binary codes with parameters of triply-shortened
  $1$-perfect codes.
\newblock \href{http://link.springer.com/journal/10623}{Des. Codes
  Cryptography} \textbf{64}(3), 275--283 (2012).
\newblock \doi{10.1007/s10623-011-9574-1}

\bibitem{MWS}
MacWilliams, F.J., Sloane, N.J.A.: The Theory of Error-Correcting Codes.
\newblock Amsterdam, Netherlands: North Holland (1977)

\bibitem{Martin:PhD}
Martin, W.J.: Completely regular subsets.
\newblock {PhD} thesis, University of Waterloo, Waterloo, Ontario, Canada
  (1992).
\newblock \urlprefix\url{http://users.wpi.edu/~martin/RESEARCH/THESIS/}

\bibitem{GAP}
{The GAP~Group}: {GAP -- Groups, Algorithms, and Programming, Version 4.6.4}
  (2013).
\newblock \urlprefix\url{http://www.gap-system.org}

\bibitem{Vas2002en}
Vasil'eva, A.{\mbox{Yu}}.: Strong distance invariance of perfect binary codes.
\newblock
  \href{http://www.mathnet.ru/php/journal.phtml?jrnid=da\&option_lang=eng}{Diskretn.
  Anal. Issled. Oper.}, Ser.~1 \textbf{9}(4), 33--40 (2002).
\newblock \urlprefix\url{http://mi.mathnet.ru/da183}.
\newblock In Russian

\bibitem{Vas09:inter}
Vasil'eva, A.{\mbox{Yu}}.: Local and interweight spectra of completely regular
  codes and of perfect colorings.
\newblock \href{http://link.springer.com/journal/11122}{Probl. Inf. Transm.}
  \textbf{45}(2), 151--157 (2009).
\newblock \doi{10.1134/S0032946009020069}.
\newblock Translated from
  \href{http://www.mathnet.ru/php/journal.phtml?jrnid=ppi\&option_lang=eng}{Probl.
  Peredachi Inf.}, 45(2), 84-90 (2009)

\bibitem{Vizing:95}
Vizing, A.G.: Distributive coloring of graph vertices.
\newblock
  \href{http://www.mathnet.ru/php/journal.phtml?jrnid=da\&option_lang=eng}{Diskretn.
  Anal. Issled. Oper.} \textbf{2}(4), 3--12 (1995).
\newblock \urlprefix\url{http://mi.mathnet.ru/eng/da/v2/i4/p3}.
\newblock In Russian

\end{thebibliography}
\providecommand\href[2]{#2} \providecommand\url[1]{\href{#1}{#1}}
  \def\DOI#1{{\small {DOI}:
  \href{http://dx.doi.org/#1}{#1}}}\def\DOIURL#1#2{{\small{DOI}:
  \href{http://dx.doi.org/#2}{#1}}}

\end{document}